\documentclass[a4paper,11pt,leqno]{amsart}
\usepackage{cite}
\usepackage[english]{babel}
\usepackage[utf8]{inputenc}
\usepackage[T1]{fontenc}
\usepackage{amsthm}
\usepackage{amssymb}
\usepackage{amsmath}

\usepackage{amsfonts}
\pagestyle{headings}
\setcounter{page}{1}
\usepackage{graphicx}
\DeclareGraphicsRule{*}{mps}{*}{}
\usepackage[all]{xy}
\newtheorem{Thm}{Theorem}[section]

\newtheorem{Cor}[Thm]{Corollary}
\newtheorem{Apulause}[Thm]{Lemma}

\theoremstyle{definition}

\renewcommand{\r}{|}
\newcommand{\R}{\mathbb{R}}

\newcommand{\N}{\mathbb{N}}
\newcommand{\Z}{\mathbb{Z}}

\newcommand{\es}{\varnothing} 

\newcommand{\Deck}{\mathrm{Deck}}

\renewcommand{\Cup}{\bigcup}
\renewcommand{\Cap}{\bigcap}


\newcommand{\ol}{\overline}
\newcommand{\ra}{\rightarrow}

\setcounter{tocdepth}{2}

\title[Non-manifold monodromy spaces]{Non-manifold monodromy spaces of branched coverings between manifolds}
\author{Martina Aaltonen}
\date{\today}
 
\begin{document}
\maketitle
\begin{flushright}
\footnote{\textit{Mathematics Subject Classification} (2010): Primary 57M12; Secondary 30C65}
\footnote{\textit{Keywords}: branched covering, monodromy space}
\end{flushright}
\begin{abstract}By a construction of Berstein and Edmonds every proper branched cover $f$ between manifolds is a factor of a branched covering orbit map from a locally connected and locally compact Hausdorff space called the monodromy space of $f$ to the target manifold. For proper branched covers between $2$-manifolds the monodromy space is known to be a manifold. We show that this does not generalize to dimension $3$ by constructing a self-map of the 3-sphere for which the monodromy space is not a locally contractible space. \end{abstract}
\section{Introduction}

 A map $f \colon X \to Y$ between topological spaces is a \textit{branched covering}, if $f$ is open, continuous and discrete map.The \textit{branch set} $B_f \subset X$ of $f$ is the set of points in $X$ for which $f$ fails to be a local homeomorphism. The map $f$ is \textit{proper}, if the pre-image in $f$ of every compact set is compact. 

Let $f \colon X \ra Y$ be a proper branched covering between manifolds. Then the codimension of $B_f\subset X$ is at least two by V\"ais\"al\"a \cite{Vaisala} and the restriction map  
$$f':=f \r X \setminus f^{-1}(f(B_f)) \colon X \setminus f^{-1}(f(B_f)) \ra Y \setminus f(B_f)$$ is a covering map between open connected manifolds, see Church and Hemmingsen \cite{Church-Hemmingsen}. Thus there exists, by classical theory of covering maps, an open manifold $X_f'$ and a commutative diagram of proper branched covering maps 
\begin{equation*}
\xymatrix{
& X_f' \ar[ld]_{p'} \ar[rd]^{q'} &\\
X \ar[rr]^f & & Y }
\end{equation*} 
where $p' \colon X_f' \ra X \setminus f^{-1}(f(B_f))$ and $q' \colon X_f' \ra Y \setminus f(B_f)$ are normal covering maps and the deck-trans\-for\-ma\-ti\-on group of the covering map $q' \colon X_f' \ra Y \setminus f(B_f)$ is isomorphic to the mono\-dromy group of $f'.$ 

Further, by Berstein and Edmonds \cite{Berstein-Edmonds}, there exists a locally compact and locally 
connected second countable Hausdorff space $X_f \supset 
X_f'$ so that $X_f \setminus X_f'$ does not locally 
separate $X_f$ and the maps $p'$ and $q'$ extend to 
proper normal branched covering maps $p \colon X_f \ra X$ and $
\bar f:=q \colon X_f \ra Y$ so that the diagram
\begin{equation*}
\xymatrix{
& X_f \ar[ld]_{p} \ar[rd]^{\bar f} &\\
X \ar[rr]^f & & Y }
\end{equation*} 
commutes, and $p$ and $\bar f$ are the Fox-completions of $p'\colon X'_f \to X$ and $q' \colon X'_f \to Y,$ see also \cite{Aaltonen}, \cite{Fox} and \cite{Montesinos-old}. In this paper the triple $(X_f,p, \bar f)$ is called the \textit{monodromy representation,} $\bar f \colon X_f \to Y$ the \textit{normalization} and the space $X_f$ the \textit{monodromy space} of $f.$  

The monodromy space $X_f$ is a locally connected and locally compact Hausdorff space and, by construction, all points in the open and dense subset $X_f \setminus B_{\bar f} \subset X_f$ are manifold points. The natural question to ask regarding the monodromy space $X_f$ is thus the following: \textit{What does the monodromy space $X_f$ look like around the branch points of $\bar f$?} 

When $X$ and $Y$ are $2$-manifolds, Sto\"ilows Theorem implies, that the points in $B_{\bar f}$ are manifold points and the monodromy space $X_f$ is a manifold. We further know by Fox \cite{Fox} that the monodromy space $X_f$ is a locally finite simplicial complex, when $f \colon X \to Y$ is a simplicial branched covering between piecewise linear manifolds. It is, however, stated as a question in \cite{Fox} under which assumptions the locally finite simplicial complex obtained in Fox' completion process is a manifold. We construct here an example in which the locally finite simplicial complex obtained in this way is not a manifold. 

\begin{Thm}\label{eka}
There exists a simplicial branched cover $f \colon S^3 \to S^3$ for which the monodromy space $X_f$ is not a manifold.
\end{Thm}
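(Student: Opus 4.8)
The plan is to reduce the three-dimensional problem to a two-dimensional one by suspension. The conceptual starting point is the local structure of the normalization $\bar f \colon X_f \to Y$ over the branch graph. Since $\bar f$ is a normal branched covering with deck group the monodromy group $G$ of $f'$, over a vertex $v$ of the graph $f(B_f) \subset Y = S^3$ the space $X_f$ is locally the cone on the \emph{regular} branched covering $\Sigma_v \to S^2$ of the link sphere $S^2 = \Lk(v)$, where the deck group is the local monodromy group $G_v \le G$ generated by the monodromies around the edges of $f(B_f)$ meeting $v$, and the branching orders are the orders of these monodromy elements in $G$. As the cone on a closed surface $\Sigma_v$ is a $3$-manifold exactly when $\Sigma_v = S^2$, the space $X_f$ fails to be a manifold at such a point as soon as $\Sigma_v$ has positive genus. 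Thus it suffices to produce an $f$ for which some $\Sigma_v$ is not a sphere, while keeping source and target equal to $S^3$.

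The building block is a branched covering of surfaces whose monodromy space is not a sphere. Concretely, I would take $g \colon S^2 \to S^2$ to be the degree-$3$ simplicial branched covering determined, via the Riemann existence theorem, by four branch points $p_1,\dots,p_4$ with monodromies $\sigma_1=\sigma_2=(1\,2)$ and $\sigma_3=\sigma_4=(1\,3)$ in $\Sym(3)$; these satisfy $\sigma_1\sigma_2\sigma_3\sigma_4 = \id$ and generate $\Sym(3)$, so $g$ has connected source and monodromy group $\Sym(3)$. A Riemann--Hurwitz count shows the source is a sphere: each transposition contributes ramification $1$, so $\chi = 3\cdot 2 - 4 = 2$. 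The monodromy space of $g$ is the regular $\Sym(3)$-covering $\Sigma_v \to S^2$ branched over the four points, each of branching order $2$, and here $\chi(\Sigma_v) = 6\,(2 - 4\cdot\tfrac12) = 0$, so $\Sigma_v$ is a torus. In particular $g$ has source $S^2$ while its (two-dimensional, hence still a manifold, in line with Sto\"ilow) monodromy space is $T^2 \neq S^2$.

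Now I would set $f := \Sigma g \colon \Sigma S^2 \to \Sigma S^2$, the suspension of $g$, identifying $\Sigma S^2 = S^3$. Then $f$ is a proper simplicial branched covering of $S^3$ by $S^3$, and its branch image $\Gamma$ is the suspension of $\{p_1,\dots,p_4\}$: a graph with two vertices, the suspension points $N,S$, joined by four edges. Since $S^3 \setminus \Gamma \cong (S^2 \setminus \{p_1,\dots,p_4\})\times(0,1)$ deformation retracts onto $S^2\setminus\{p_1,\dots,p_4\}$, the monodromy of $f$ coincides with that of $g$; in particular $G = G_N = \Sym(3)$, so there is a single point of $X_f$ over $N$ whose link is the regular $\Sym(3)$-covering of the link sphere, namely $\Sigma_N = T^2$. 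Equivalently, and more globally, the Fox completion is natural under suspension, so $X_f = \Sigma(X_g) = \Sigma T^2$; at each of the two suspension points $X_f$ is the cone on a torus and is therefore not a manifold, which proves the theorem.

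The main obstacle is the naturality claim $X_f = \Sigma X_g$: one must verify that the Fox completion of the restricted regular covering $q'$ over $S^3\setminus\Gamma$ is exactly the suspension $\Sigma \bar g$ of the two-dimensional normalization. This amounts to checking that $\Sigma \bar g \colon \Sigma T^2 \to S^3$ is a normal branched covering whose branch data and monodromy agree with those of $f$, that $\Sigma T^2$ is locally connected, locally compact and Hausdorff, and that the adjoined point set does not locally separate it; the conclusion then follows from the uniqueness of the Fox completion. The delicate part is the behaviour at the suspension points, where one must identify the preimage of a conical neighbourhood of $N$ with the cone on $\bar g^{-1}$ of the link sphere. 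I would finally note that this construction produces a non-manifold point whose neighbourhoods are cones, hence still locally contractible; strengthening the example to the non-locally-contractible space promised in the abstract requires accumulating or iterating such singularities rather than performing a single suspension.
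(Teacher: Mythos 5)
Your proposal is correct and takes essentially the same route as the paper: suspend a branched cover $S^2 \to S^2$ with four branch values, use the fact that normalization commutes with suspension (the step you rightly flag as the key obstacle, which the paper proves as a lemma via a deck-group argument), and conclude via the observation that the suspension of a positive-genus surface fails to be a manifold at its cone points. The only differences are cosmetic: the paper builds its two-dimensional example as a composition of two winding maps and rules out $X_g = S^2$ by a general Riemann--Hurwitz estimate, whereas you fix explicit $\Sym(3)$ monodromy data and identify the monodromy surface as a torus.
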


Theorem \ref{eka} implies that the monodromy space is not in general a manifold even for proper simplicial branched covers between piecewise linear manifolds. Our second theorem states further, that in the non-piecewise linear case the monodromy space is not in general even a locally contractible space. We construct a branched covering, which is a piecewise linear branched covering in the complement of a point, but for which the monodromy space is not a locally contractible space.  

\begin{Thm}\label{toka}
There exists a branched cover $f \colon S^3 \to S^3$ for which the monodromy space $X_f$ is not a locally contractible space.  
\end{Thm}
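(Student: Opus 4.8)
The plan is to realize $f$ as a limit of piecewise linear branched covers whose ramification accumulates at a single point $y_0 \in S^3$, engineered so that the monodromy space $X_f$ carries nontrivial topology at every scale around the point $x_0 \in \bar{f}^{-1}(y_0)$. The local model I would reuse is the singularity behind Theorem \ref{eka}: near a point with spherical link $S^2$, let the branch image meet that link in finitely many points $p_1,\dots,p_{2g+2}$ and take the double (or a suitable) cover; by Sto\"ilow the completion of the induced cover of the link is a closed surface of genus $g$, so the corresponding point is a cone point whose link is $\Sigma_g$ rather than $S^2$, hence a non-manifold point. Crucially, one such block is still locally contractible, since near it the space is the cone on $\Sigma_g$. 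To destroy local contractibility I must let the genus grow without bound as one approaches $y_0$.

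Concretely, I would identify a punctured neighborhood of $y_0$ with $S^2 \times (0,1]$, the deleted end being $y_0$, and arrange the branch image $f(B_f)$ to be a piecewise linear $1$-complex inside it meeting each cross-section $S^2 \times \{t\}$ in a finite set $P_t$ whose cardinality increases as $t \to 0$. Working shell by shell on the pieces $S^2 \times [\tfrac{1}{n+1},\tfrac{1}{n}]$, each shell would carry a piecewise linear branched cobordism that adds finitely many new branch points and thereby raises the genus of the connected cross-sectional cover from $g_n$ to $g_{n+1}$, with $g_n \to \infty$; the monodromy homomorphisms into $\Sym(k)$ are chosen to match across shell boundaries so that the pieces assemble into one covering of the punctured neighborhood. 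Adjoining $y_0$ and a single point $x_0$ upstairs yields the global $f \colon S^3 \to S^3$, which is piecewise linear on every shell, hence on all of $S^3 \setminus \{y_0\}$, and fails to be piecewise linear only at $y_0$. Checking that $f$ is open, continuous, discrete and proper, so that the Berstein--Edmonds and Fox machinery of the introduction applies, is routine shell by shell.

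The core of the argument is the local analysis of the Fox completion at $x_0$. A point of $X_f$ over $y_0$ is a compatible system of components of $q'^{-1}(U \setminus f(B_f))$ for $U$ shrinking to $y_0$, and $X_f$ is locally contractible at $x_0$ precisely when these can be chosen to include null-homotopically into one another. I would fix a neighborhood $U$ of $x_0$ and observe that every smaller neighborhood $V$ still contains the covers of all but finitely many shells; a loop $\gamma$ running around a handle created at scale $n$, with $n$ so large that the handle lies in $V$, represents a class in $\pi_1(V)$ (or $H_1(V)$) which, by the way the cross-sectional covers are stacked, can be undone only by passing outward to a scale $m < n$ bounded away from $x_0$. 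Hence $\gamma$ is essential in $V$, no neighborhood of $x_0$ admits a null-homotopic inclusion of a smaller one, and $X_f$ is not locally contractible at $x_0$, while remaining a locally finite simplicial complex --- and thus locally contractible --- everywhere else.

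The main obstacle I anticipate is exactly this last step: showing that the Fox completion does not silently fill in the handles it is accumulating. In dimension $2$ Sto\"ilow's theorem forces the completion to smooth out such local topology, and even a single block in dimension $3$ stays locally contractible; non-contractibility is therefore an essentially infinite phenomenon, and one must rule out that the nested covers conspire to contract all the handles near $x_0$. The delicate point is to choose the branch arcs and their monodromy representations so that the homomorphisms between the first homologies of nested punctured neighborhoods are nonzero for infinitely many scales --- equivalently, so that the inverse system of cross-sectional covers does not stabilize and keeps contributing essential cycles to $X_f$ arbitrarily close to $x_0$. Making this stacking explicit, and verifying that the resulting classes survive in the completion and not merely in the complement of the branch set, is where the substantive work lies.
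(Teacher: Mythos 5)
Your construction plan (a shell-by-shell piecewise linear branched cover whose branching accumulates at a single point, assembled via the Berstein--Edmonds extension theorem) is close in outward form to what the paper does, but your proof has a genuine gap at exactly the step you yourself flag as ``where the substantive work lies'': you never show that the handle loops created at scale $n$ remain essential in neighbourhoods of $x_0$ \emph{inside the completion} $X_f$. This is not a routine verification; it is the entire difficulty. A neighbourhood of $x_0$ in $X_f$ contains $x_0$ itself, and null-homotopies are allowed to pass through $x_0$ --- this is precisely the cone phenomenon that makes a single suspension singularity (the example behind Theorem \ref{eka}) locally contractible even though its link has nontrivial topology: in the cone on $\Sigma_g$ every loop bounds a disk through the cone point. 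So ``genus $g_n \to \infty$'' is by itself no obstruction: nothing in your setup rules out that the union of the shell covers together with the point $x_0$ is simply connected, and controlling the inverse system of first homologies of \emph{punctured} neighbourhoods, as you propose, only constrains $X_f \setminus \{x_0\}$, not $X_f$. An argument of the kind you sketch would need a reason why no singular disk in a neighbourhood of $x_0$, possibly passing through $x_0$, can cap off a handle loop, and you do not supply one.

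The paper closes this gap with a mechanism that is not about genus growth at all (its cross-sections are tori at every scale) but about normality of deck-transformation group actions. The cross-sectional covers alternate between normal branched covers with no points of local degree $3$ (even shells) and covers possessing a point of local degree $3$ (odd shells); consequently $f$ restricted over any ball around $y$ is never a normal branched cover, because the deck groups of its restrictions to consecutive collar preimages $M_{2k}$, $M_{2k+1}$ would then be isomorphic, which conditions (iii)--(iv) forbid --- yet over each even collar $N_{2k}$ the restriction \emph{is} normal. This makes $y$ a destructive point, and Lemma \ref{kuu} converts that into topology: for any open $W$ meeting $\bar f^{-1}\{y\}$, if the preimages $\{W_0^A, W_0^B\}$ of a suitably chosen strong domain cover $\{A,B\}$ of a destructive neighbourhood $V_0$ had connected intersection, the canonical restriction isomorphisms of deck groups would force $f \r f^{-1}(V_0)$ to be normal, a contradiction; hence the intersection is disconnected, and the reduced Mayer--Vietoris sequence (Lemma \ref{2}) yields $H_1(W;\Z) \neq 0$ outright. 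Note that this argument applies directly to every open set containing a point of $\bar f^{-1}\{y\}$, singular points included, so no limiting argument and no tracking of cycles through the Fox completion is ever needed. To salvage your approach you would need an invariant of this kind --- one visible in every open set containing $x_0$, not merely in the punctured neighbourhoods.
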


We end this introduction with our results on the cohomological properties of the monodromy space. The monodromy space of a proper branched covering between manifolds is always a locally orientable space of finite cohomological dimension. However, in general the monodromy space is not a cohomology manifold in the sense of Borel \cite{Borel-book}; there exist a piecewise linear branched covering $S^3 \to S^3$ for which the monodromy space is not a cohomology manifold. This shows, in particular, that the theory of normalization maps of proper branched covers between manifolds is not covered by Smith-theory in \cite{Borel-book} and completes \cite{Aaltonen-Pankka} for this part.  

This paper is organized as follows. In Section \ref{tm}, we give an example $f \colon S^2 \to S^2$ of an open and discrete map for which the monodromy space is not a two sphere. In Section \ref{ts} we show that the suspension $\Sigma f \colon S^3 \to S^3$ of $f$ prove Theorem \ref{eka} and that the monodromyspace of $f$ is not a cohomology manifold. In Section \ref{pii} we construct an open and discrete map $g \colon S^3 \to S^3.$ In Section \ref{dp} we show that $g \colon S^3 \to S^3$ proves Theorem \ref{toka}. 

\subsection*{Aknowledgements} I thank my adviser Pekka Pankka for introducing me to the paper \cite{Heinonen-Rickman} by Heinonen and Rickman and for many valuable discussions on the topic of this paper.

\section{Preliminaries}

In this paper all topological spaces are locally connected and locally compact Hausdorff spaces if not stated otherwise. Further, all proper branched coverings $f \colon X \to Y$ between topological spaces are also branched coverings in the sense of Fox \cite{Fox} and completed coverings in the sense of \cite{Aaltonen}; $Y':=Y\setminus f(B_f)$ and $X':=X \setminus f^{-1}(B_f)$ are open dense subsets so that $X \setminus X'$ does not locally separate $X$ and $Y' \setminus Y$ does not locally separate $Y.$ We say that the proper branched covering $f \colon X \ra Y$ is \textit{normal}, if $f':=f\r X' \colon X' \ra Y'$ is a normal covering. By Edmonds \cite{Edmonds-Michigan} every proper normal branched covering $f \colon X \ra Y$ is an orbit map for the action of the deck-transformation group $\Deck(f)$ i.e. $X/\mathrm{Deck(f)} \approx Y.$ 

We recall some elementary properties of proper branched coverings needed in the forthcoming sections. Let $f \colon X \ra Y$ be a proper normal branched covering and $V \subset Y$ an open and connected set. Then each component of $f^{-1}(V)$ maps onto $V,$ see \cite{Aaltonen}. Further, if the pre-image $D:=f^{-1}(V)$ is connected, then $f \r D \colon D \ra V$ is a normal branched covering and the map
\begin{equation}\label{tf}
\Deck(f) \to \Deck(f \r D), \tau \mapsto \tau\r D,
\end{equation} 
is an isomorphism, see \cite{Montesinos-old}. 

\begin{Apulause}\label{1}Let $f \colon X \ra Y$ be a branched covering between manifolds. Suppose $p \colon W \ra X$ and $q \colon W \ra Y$ are normal branched coverings so that $q=p \circ f.$ Then $\Deck(p) \subset \Deck(q)$ is a normal subgroup if and only if $f$ is a normal branched covering.
\end{Apulause}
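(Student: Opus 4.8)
The plan is to reduce the statement to the classical correspondence between normal subgroups and normal intermediate coverings, by passing from the three branched coverings to their unbranched restrictions over one common generic open set. First I record the inclusion $\Deck(p) \subset \Deck(q)$: if $\tau \colon W \to W$ satisfies $p \circ \tau = p$, then $q \circ \tau = f \circ p \circ \tau = f \circ p = q$, so $\tau \in \Deck(q)$. Write $H := \Deck(p)$ and $G := \Deck(q)$, so that $H \subset G$ and the whole statement becomes: $H$ is normal in $G$ if and only if $f$ is a normal branched covering.

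Next I would choose the generic set. Put $Z := q(B_q) \subset Y$ and $Y_0 := Y \setminus Z$, $W_0 := q^{-1}(Y_0)$ and $X_0 := f^{-1}(Y_0)$, so that $W_0 = p^{-1}(X_0)$. By multiplicativity of the local index one has $B_p \cup p^{-1}(B_f) \subset B_q$, whence (using that $p$ is surjective) both $f(B_f) \subset Z$ and $q(B_p) \subset Z$; therefore over $Y_0$ all three maps restrict to honest coverings
\[
p_0 \colon W_0 \to X_0, \quad q_0 \colon W_0 \to Y_0, \quad f_0 \colon X_0 \to Y_0,
\]
with $q_0 = f_0 \circ p_0$, and $p_0,q_0$ are again normal. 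Since $Z$ is a closed thin set that does not locally separate $Y$ and branch sets have codimension at least two by V\"ais\"al\"a, the sets $Y_0, X_0, W_0$ are connected. Applying the restriction isomorphism \eqref{tf} to $q$ (with $V=Y_0$) and to $p$ (with $V=X_0$) yields isomorphisms $G \cong \Deck(q_0)$ and $H \cong \Deck(p_0)$, both realized by the \emph{same} operation $\tau \mapsto \tau|_{W_0}$; consequently the inclusion $H \subset G$ is carried isomorphically to $\Deck(p_0) \subset \Deck(q_0)$.

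Now the problem is a matter of ordinary covering spaces. Fixing basepoints and writing $\Pi := \pi_1(Y_0)$, the normal covering $q_0$ corresponds to a normal subgroup $N \triangleleft \Pi$ with $G \cong \Pi/N$, the intermediate covering $f_0$ corresponds to a subgroup $K$ with $N \subset K \subset \Pi$ and $H \cong K/N$, and $f_0$ is a normal covering if and only if $K \triangleleft \Pi$. By the correspondence theorem for subgroups containing $N$, this holds if and only if $K/N \triangleleft \Pi/N$, i.e. if and only if $H \triangleleft G$. It then remains to transfer normality of $f_0$ to $f$ itself: because $f(B_f) \subset Z$, we have $Y_0 \subset Y' := Y \setminus f(B_f)$ with $Y' \setminus Y_0$ a thin set of codimension at least two in the manifold $Y'$, so the inclusion induces a surjection $\pi_1(Y_0) \twoheadrightarrow \pi_1(Y')$ and $f_0$ is exactly the pullback of $f' := f|_{X'}$ over $Y_0$. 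Since a covering is normal precisely when its pullback along a $\pi_1$-surjection is normal, $f_0$ is normal if and only if $f'$ is, that is, if and only if $f$ is a normal branched covering. Chaining the three equivalences gives the claim.

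I expect the main obstacle to lie in this generic-set bookkeeping rather than in the group theory: one must verify that a \emph{single} open set $Y_0$ simultaneously trivializes the branching of $p$, $q$ and $f$, that its various preimages $W_0$ and $X_0$ stay connected so that \eqref{tf} applies, and that normality of $f$ is still faithfully detected after the further removal of the thin set $Z \setminus f(B_f)$. Each of these points rests on the codimension-two and non-local-separation properties of branch sets furnished by V\"ais\"al\"a's theorem and the Fox completion, and getting them lined up consistently is where the care is needed.
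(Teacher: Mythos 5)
Your overall strategy is the same as the paper's: restrict all three maps to the unbranched locus over a generic open subset of $Y$, identify the inclusion $\Deck(p)\subset\Deck(q)$ with an inclusion of subgroups of $\pi_1$ of the base via the deck correspondence, and finish with the correspondence theorem. The difference is your choice of generic set, and that is exactly where your proof has a genuine gap. You delete $Z=q(B_q)$ (so that all three restrictions become honest coverings), whereas the paper deletes only $f(B_f)$; as a result you must transfer normality of $f_0$ (over $Y_0=Y\setminus Z$) back to normality of $f'$ (over $Y'=Y\setminus f(B_f)$), which is what normality of $f$ means by definition. You justify this transfer -- and partly also the connectedness of $Y_0,X_0,W_0$ -- by asserting that the relevant branch sets have codimension at least two ``by V\"ais\"al\"a.'' But V\"ais\"al\"a's theorem concerns open discrete maps between \emph{manifolds}, and $W$ is not assumed to be a manifold; in this paper $W$ is typically the monodromy space $X_f$, whose failure to be a manifold is the whole point. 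So nothing bounds the dimension of $B_p$, $B_q$, or of $Y'\setminus Y_0\subset f(p(B_p))$, and the claimed surjection $\pi_1(Y_0)\twoheadrightarrow\pi_1(Y')$ is unsupported. That surjection is precisely what you need for the implication ``$f_0$ normal $\Rightarrow$ $f'$ normal'' (the converse direction is harmless, since deck transformations of $f'$ restrict to $X_0=f'^{-1}(Y_0)$ and permute its fibres transitively), and that implication carries the direction ``$\Deck(p)$ normal in $\Deck(q)$ $\Rightarrow$ $f$ normal'' of the lemma.

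The step is repairable, but by a different mechanism than the one you cite. Under the paper's standing conventions all proper branched coverings are completed coverings, so $q(B_q)$ is closed, nowhere dense, and does not locally separate $Y$; for such a set $A$ in a connected, locally connected space $M$ the inclusion $M\setminus A\hookrightarrow M$ is automatically $\pi_1$-surjective (otherwise the connected covering of $M$ associated to the image subgroup would contain a clopen lifted copy of $M\setminus A$ whose closure, by non-local separation, would disconnect the covering). This is a non-local-separation argument, not a codimension count, and it is an input you would have to supply. The same remark fixes your connectivity claims ($Y_0$ and $W_0$ are connected by non-local separation, and then $X_0=p(W_0)$ is connected as a continuous image). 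For comparison: the paper sidesteps the transfer step entirely by working over $Y'$, at the price of implicitly assuming that $p'$ and $q'$ are genuine covering maps there, i.e. that $B_q\subset q^{-1}(f(B_f))$ -- which holds in every application in the paper, since there $W$ arises as the Fox completion of a covering of $Y'$, making $Y_0=Y'$ and your transfer step vacuous.
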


\begin{proof} Let $Y':=Y \setminus f(B_f),$ $X':=X \setminus f^{-1}(f(B_f))$ and $W'=W \setminus q^{-1}(f(B_f)).$ Let $f':=f \r X' \colon X' \to Y',$ $p':=p \colon W' \ra X'$ and $q':= q \r W'\colon W' \to Y'$ and let $w_0 \in W',$ $x_0=p'(w_0)$ and $y_0=q'(w_0).$ Then $\mathrm{Deck}(p) \subset \mathrm{Deck}(q)$ is a normal subgroup if and only if $\mathrm{Deck}(p') \subset \mathrm{Deck}(q')$ is a normal subgroup and the branched covering $f$ is a normal branched covering if and only if the covering $f'$ is normal. We have also a commutative diagram 
\begin{equation*}
\xymatrix{
& W' \ar[ld]_{p'} \ar[rd]^{q'} &\\
X' \ar[rr]^{f'} & & Y'&}
\end{equation*} 
of covering maps, where 
$$q_*'(\pi_1(W',w_0)) \subset f'_*(\pi_1(X', x_0)) \subset \pi_1(Y',y_0).$$  The deck-homomorphism $$\pi_{(q',_{w_0})} \colon \pi_1(Y',y_0) \to \mathrm{Deck}(q')$$ now factors as 
\begin{equation*}
\xymatrix{
\pi_1(Y',y_0) \ar[rr]^{\pi_{(q',w_0)}} \ar[dr]& & \mathrm{Deck}(q')\\
& \pi_1(Y',y_0)/q_*'(\pi_1(W',w_0)) \ar[ru]^{\ol{\pi}_{(q',w_0)}},&}
\end{equation*}  
for an isomorphism $\ol{\pi}_{(q',w_0)}$ and
$$\pi_{(q',_{w_0})}(f'_* (\pi_1(X',x_0)))=\ol{\pi}_{(q',w_0)} \big(f'_*(\pi_1(X',x_0))/q_*'(\pi_1(W', w_0))\big)=\mathrm{Deck}(p').$$ 
In particular, $\mathrm{Deck}(p') \subset \mathrm{Deck}(q')$ is a normal subgroup if and only if
$$f'_*(\pi_1(X',x_0))/q_*'(\pi_1(W', w_0)) \subset \pi_1(Y',y_0)/q_*'(\pi_1(W',w_0))$$ is a normal subgroup. Since $q_*'(\pi_1(W',w_0)) \subset f'_*(\pi_1(X', x_0)),$ this implies that 
$\mathrm{Deck}(p') \subset \mathrm{Deck}(q')$ is a normal subgroup if and only if $$f'_*(\pi_1(X',x_0)) \subset \pi_1(Y',y_0)$$ is a normal subgroup. We conclude that $f$ is a normal branched covering if and only if $\mathrm{Deck}(p) \subset \mathrm{Deck}(q)$ is a normal subgroup.  
\end{proof}

Let $f \colon X \ra Y$ be a proper branched covering. We say that $D \subset X$ is a \textit{normal neighbourhood} of $x$ if $f^{-1}\{f(x)\} \cap D=\{x\}$ and $f \r D \colon D \ra V$ is a proper branched covering. We note that for every $x \in X$ there exists a neighbourhood $U$ of $f(x)$ so that for every open connected neighbourhood $V \subset U$ of $f(x)$ we have the $x$-component $D$ of $f^{-1}(V)$ a normal neighbourhood of $x$ and the pre-image $f^{-1}(E) \subset X$ is connected for every open connected subset $E \subset Y$ satisfying $Y \setminus E \subset U.$ This follows from the following lemma.

\begin{Apulause}\label{fun} Let $f \colon X \to Y$ be a proper branched covering. Then for every $y \in Y$ there exists such a neighbourhood $U$ of $y$ that the pre-image $f^{-1}(V) \subset X$ is connected for every open connected set $V \subset Y$ satisfying $Y \setminus V \subset U.$
\end{Apulause}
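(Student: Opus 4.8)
The plan is to reduce the connectivity of $f^{-1}(V)$ to a transitivity statement for the monodromy of the unbranched part of $f$, and then to force that transitivity by choosing $U$ to be a small coordinate ball. Throughout I assume, as elsewhere in the paper, that $X$ and $Y$ are connected and that $Y$ is a manifold. Write $Y':=Y\setminus f(B_f)$, $X':=X\setminus f^{-1}(f(B_f))$ and recall that $f':=f\r X'\colon X'\to Y'$ is a covering map whose fibres are finite (by discreteness and properness) and of constant cardinality $d$ over the connected base $Y'$. Since $X\setminus X'$ does not locally separate $X$ and $X'$ is dense, the open set $f^{-1}(V)$ is connected if and only if its dense subset $(f')^{-1}(V')=f^{-1}(V)\cap X'$ is connected, where $V':=V\setminus f(B_f)$; and because $f(B_f)$ does not locally separate $Y$, the set $V'$ is connected whenever $V$ is. So it suffices to prove that $(f')^{-1}(V')$ is connected.

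For this I would pass to monodromy. Fix a basepoint $y_0\in Y'\setminus U$ and let $F=(f')^{-1}(y_0)$ and $\rho\colon\pi_1(Y',y_0)\to\Sym(F)$ be the monodromy of $f'$. Since $X'$ is connected, $\rho(\pi_1(Y',y_0))$ acts transitively on $F$. The covering $(f')^{-1}(V')\to V'$ is the restriction of $f'$, so its monodromy is $\rho\circ i_*$, where $i_*\colon\pi_1(V',y_0)\to\pi_1(Y',y_0)$ is induced by inclusion, and its connected components correspond bijectively to the orbits of $\rho(i_*\pi_1(V',y_0))$ on $F$. Hence it is enough to arrange that $i_*$ is surjective: then $\rho(i_*\pi_1(V',y_0))=\rho(\pi_1(Y',y_0))$ is transitive and $(f')^{-1}(V')$ is connected.

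It remains to choose $U$. First I would use properness: as $f^{-1}(y)$ is finite and the proper map $f$ is closed, there is an open connected neighbourhood $U$ of $y$ with $f^{-1}(U)$ a disjoint union of normal neighbourhoods of the points of $f^{-1}(y)$; shrinking further, I take $U$ to be a coordinate ball of $Y$ with compact closure. For every $V$ as in the statement we have $Y\setminus V\subset U$, hence $Y'\setminus U\subset V'\subset Y'$, and choosing $y_0\in Y'\setminus U$ the map $i_*$ factors through the map $\pi_1(Y'\setminus U,y_0)\to\pi_1(Y',y_0)$ induced by inclusion. Thus it suffices that $Y'\setminus U\hookrightarrow Y'$ be $\pi_1$-surjective.

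This last point is the crux, and the step where the hypothesis that $Y$ is a manifold of dimension at least two is essential: deleting the coordinate ball $U$ (and with it a subset of a contractible chart) from $Y$ kills no element of $\pi_1$, since by general position a loop may be pushed off $U$. Concretely, a van Kampen decomposition $Y'=(Y'\setminus U)\cup U'$ along the sphere $\partial U$ exhibits $\pi_1(Y'\setminus U)\to\pi_1(Y')$ as onto (an isomorphism when $\dim Y=3$, a quotient by the normal closure of $\partial U$ when $\dim Y=2$). I expect the verification of this surjectivity, together with the check that it survives the removal of the codimension-at-least-two set $f(B_f)$ from the chart, to be the main technical obstacle. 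That the dimension hypothesis cannot be dropped is shown by the connected double cover $S^1\to S^1$, for which $f^{-1}(Y\setminus\ol{U_0})$ is disconnected for every small arc $U_0$; this is exactly the case where removing $U$ destroys $\pi_1(Y)$.
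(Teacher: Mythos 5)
Your reduction is correct as far as it goes: it suffices to show $(f')^{-1}(V')$ is connected, this follows if the monodromy image of $\pi_1(V',y_0)$ acts transitively on the fibre $F$, and that in turn would follow if the inclusion $Y'\setminus U\hookrightarrow Y'$ were $\pi_1$-surjective. The genuine gap is that this last claim --- which you defer as ``the main technical obstacle'' --- is not a technicality but is false, and it fails exactly where the paper needs Lemma \ref{fun}. General position pushes a loop off $U$ inside $Y$; the homotopy, however, must stay in $Y'=Y\setminus f(B_f)$, and nothing makes $f(B_f)\cap U$ empty: the lemma is invoked in the proof of Theorem \ref{15} at the destructive point $y$ of the example of Section \ref{pii}, and there every neighbourhood of $y$ contains closed curves of the branch image (if the branch image inside a shell $G_n$ were a PL forest, its complement in $G_n\approx S^2\times[0,1]$ would be simply connected and could not carry the connected covering of degree $>1$ lying over it). So take $U=\mathrm{int}\,B_k$, a PL ball, and an embedded circle $C\subset f(B_f)\cap U$. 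Every loop in $Y'\setminus U$ lies in $S^3\setminus U$, a closed $3$-ball, hence is nullhomotopic in $S^3\setminus C$; but a small meridian of $C$ lies in $Y'$ and generates $H_1(S^3\setminus C;\Z)\cong\Z$ by Alexander duality. Thus the meridian class lies in $\pi_1(Y')$ but not in the image of $\pi_1(Y'\setminus U,y_0)$, and surjectivity fails --- for every ball around $y$, so shrinking $U$ cannot cure the defect. (Your van Kampen decomposition does not help: the second piece is $U'\cap Y'$, not the ball $U'$, and its fundamental group contributes exactly these meridians.) None of this contradicts the lemma: connectivity of $f^{-1}(V)$ needs only transitivity of the monodromy image, which is much weaker than surjectivity of $i_*$; your sufficient condition overshoots what is true.

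The paper's proof is elementary and sidesteps $\pi_1$ entirely by letting $U$ encode global, not local, information. Fix $y_0\neq y$; properness makes $f^{-1}\{y\}$ and $f^{-1}\{y_0\}$ finite; choose a path $\gamma$ in $X\setminus f^{-1}\{y\}$ passing through every point of $f^{-1}\{y_0\}$, and let $U$ be a neighbourhood of $y$ disjoint from the compact set $f(\gamma[0,1])$. If $V$ is open, connected and $Y\setminus V\subset U$, then $f(\gamma[0,1])\subset Y\setminus U\subset V$, so the connected set $\gamma[0,1]$ lies in $f^{-1}(V)$ and places all of $f^{-1}\{y_0\}$ in one component $D$; since every component of $f^{-1}(V)$ maps onto $V\ni y_0$, every component meets $f^{-1}\{y_0\}\subset D$, whence $f^{-1}(V)=D$ is connected. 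Because the stated property is inherited by smaller neighbourhoods, this shows a posteriori that your small coordinate balls do satisfy the conclusion (equivalently, the transitivity you wanted does hold for them); it simply cannot be reached through $\pi_1$-surjectivity. Your closing observation, on the other hand, is correct and valuable: the double cover $S^1\to S^1$ shows that connectivity of $X\setminus f^{-1}\{y\}$ --- automatic for manifolds of dimension at least two, and assumed without comment in the paper's proof --- is a genuine hypothesis.
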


\begin{proof} Let $y_0 \in Y \setminus \{y\}.$ Since $f$ is proper the subsets $f^{-1}\{y\}, f^{-1}\{y_0\} \subset X$ are finite. Since $W \setminus f^{-1}\{y\}$ is connected, there exists a path $\gamma \colon [0,1] \to W \setminus f^{-1}\{y\}$ so that $f^{-1}\{y_0\} \subset \gamma[0,1].$ Let $U \subset Y$ be a neighbourhood of $y$ satisfying $U \cap f(\gamma[0,1])=\es.$ 

Suppose that $V \subset Y$ is an open connected subset satisfying $Y \setminus V \subset U.$
Then $f^{-1}\{y_0\} \subset \gamma[0,1]$ is contained in a component of $f^{-1}(V),$ since $f(\gamma[0,1]) \subset V.$ Since $V \subset Y$ is connected, every component of $f^{-1}(V)$ maps onto $V.$ Thus $f^{-1}(V)=D.$  
\end{proof}

We end this section with introduction the terminology and elementary results for the part of singular homology. Let $X$ be a locally compact and locally connected second countable Hausdorff space. In this paper $H_i(X; \Z)$ is the $i$:th singular homology group of $X$ and $\widetilde{H}_i(X;\Z)$ the $i$:th reduced singular homology group of $X$ with coefficients in $\Z,$ see \cite{Massey-book}. We recall that $H_i(X; \Z)=\widetilde{H}_i(X;\Z)$ for all $i\neq 0$ and $\widetilde{H}_0(X;\Z)=\Z^{k-1},$ where $k$ is the number of components in $X.$ We recall that for open subsets $U,V \subset X$ with $X=U \cup V$ and $X=U \cap V$ connected the reduced Mayer-Vietoris sequence is a long exact sequence of homomorphisms that terminates as follows:
$$\to H_1(X;\Z) \to \widetilde{H}_0(U \cap V;\Z) \to \widetilde{H}_0(U; \Z) \oplus \widetilde{H}_0(V; \Z) \to \widetilde{H}_0(X; \Z).$$

\section{Local orientability and cohomological dimension}
In this section we show that the monodromy space of a proper branched covering between manifolds is a locally orientable space of finite cohomological dimension. We also introduce Alexander-Spanier cohomology following the terminology of Borel \cite{Borel-book} and Massey \cite{Massey-book} and define a cohomology manifold in the sense of Borel \cite{Borel-book}.

Let $X$ be a locally compact and locally connected second countable Hausdorff space. In this paper $H_c^i(X; \Z)$ is the $i$:th Alexander-Spanier cohomology group of $X$ with coefficients in $\Z$ and compact supports. Let $A \subset X$ be a closed subset and $U=X \setminus A$. The standard push-forward homomorphism $H^i_c(U;\Z) \to H^i_c(X;\Z)$ is denoted $\tau^i_{XU},$ the standard restriction homomorphism $H^i_c(X;\Z)\to H^i_c(U;\Z)$ is denoted $\iota^i_{UX}$ and the standard boundary homomorphism $H^i_c(A; \Z) \to H^{i+1}_c(X \setminus A; \Z)$ is denoted $\partial_{(X/A)A}^i$ for all $i \in \N.$ 

We recall that the exact sequence of the pair $(X,A)$ is a long exact sequence
$$\to H^i_c(X \setminus A; \Z) \to H^i_c(X; \Z) \to H^i_c(A; \Z) \to H^{i+1}_c(X \setminus A; \Z)\to $$
where all the homomorphisms are the standard ones. We also recall that $\tau^i_{VX}=\tau^i_{XU} \circ \tau^i_{UV}$ for all open subsets $V \subset U$ and $i \in \N.$ 


The \textit{cohomological dimension} of a locally compact and locally connected Hausdorff space $X$ is $\leq n,$ if $H_c^{n+1}(U ;\Z)=0$ for all open subsets $U \subset X.$   

\begin{Thm} Let $f \colon X \to Y$ be a proper branched covering between $n$-manifolds. Then the monodromy space $X_f$ of $f$ has dimension $\leq n.$ 
\end{Thm}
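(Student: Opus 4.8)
The plan is to bound the cohomological dimension of $X_f$ by comparing it locally to the target manifold $Y$ via the normalization map $\bar f \colon X_f \to Y$.

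The plan is to bound the cohomological dimension of $X_f$ by transporting the corresponding bound for $Y$ across the normalization $\bar f \colon X_f \to Y$. Recall from the introduction that $\bar f$ is a proper normal branched covering and, by Edmonds, an orbit map realizing $X_f/\Deck(\bar f) \approx Y$; since $f$ is proper its fibres are finite, so $G := \Deck(\bar f)$ is a finite group and every fibre $\bar f^{-1}\{y\}$ is a finite discrete set. On the target side, $Y$ is an $n$-manifold, so $H_c^{n+1}(V;\Z)=0$ for every open $V \subset Y$, i.e. the cohomological dimension of $Y$ is $\leq n$. The goal is to deduce $H_c^{n+1}(U;\Z)=0$ for every open $U \subset X_f$, and the virtue of this route is that it never requires a local model at a branch point of $\bar f$: only the zero-dimensionality of the fibres is used.

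The first step I would take is to recast the defining condition sheaf-theoretically, so that arbitrary open subsets $U$ and the branch points of $\bar f$ are handled uniformly. For an open $U \subset X_f$ with inclusion $j \colon U \hookrightarrow X_f$ one has $H_c^{n+1}(U;\Z) \cong H_c^{n+1}(X_f; j_!\Z_U)$, where $j_!\Z_U$ is the extension by zero. Hence it suffices to prove $H_c^{n+1}(X_f; \mathcal{S})=0$ for every sheaf $\mathcal{S}$ of abelian groups on $X_f$. By the same reasoning on $Y$, the manifold bound upgrades to the statement that $H_c^{k}(Y; \mathcal{T})=0$ for every $k>n$ and every sheaf $\mathcal{T}$ on $Y$; this is the standard fact that cohomological dimension, computed with the constant sheaf $\Z$ on all open sets, governs all sheaf coefficients.

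The second step is the comparison itself. Because $\bar f$ is proper with finite fibres, I would invoke the Leray spectral sequence for cohomology with compact supports,
\begin{equation*}
E_2^{p,q}=H_c^{p}\bigl(Y; R^q\bar f_{!}\,\mathcal{S}\bigr) \Longrightarrow H_c^{p+q}(X_f; \mathcal{S}).
\end{equation*}
The stalk of $R^q\bar f_{!}\,\mathcal{S}$ at $y$ is $H_c^{q}(\bar f^{-1}\{y\}; \mathcal{S})$, which vanishes for $q \geq 1$ since $\bar f^{-1}\{y\}$ is finite and discrete. Thus the spectral sequence degenerates to isomorphisms $H_c^{p}(X_f; \mathcal{S}) \cong H_c^{p}(Y; \bar f_{!}\,\mathcal{S})$. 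Taking $p=n+1$ and applying the sheaf-level bound on $Y$ with $\mathcal{T}=\bar f_{!}\,\mathcal{S}$ gives $H_c^{n+1}(X_f; \mathcal{S})=0$, and the reduction of the first step then yields $H_c^{n+1}(U;\Z)=0$ for all open $U \subset X_f$.

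The main obstacle is not geometric but the sheaf-theoretic bookkeeping: verifying that for the manifold $Y$ the constant-coefficient dimension bound propagates to all sheaf coefficients, and that the higher compactly-supported direct images of the finite-fibred proper map $\bar f$ vanish. Both are classical (Bredon), and together they are exactly what allows the branch set of $\bar f$ to be ignored. A more hands-on alternative that avoids spectral sequences would establish $H_c^{p}(X_f;\mathcal{S}) \cong H_c^{p}(Y; \bar f_{!}\mathcal{S})$ directly from the fact that $\bar f$ is a finite-to-one proper closed map, of Vietoris–Begle type; there the one routine point needing attention is that $\bar f_{!}$ exchanges compact supports correctly, after which the same conclusion follows.
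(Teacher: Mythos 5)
Your proof is correct, but it follows a genuinely different route from the paper's. The paper works inside $X_f$: it isolates the branch set $B_{\bar f}$ of the normalization, quotes Borel's theorem that a closed subset which does not locally separate a space has cohomological dimension at most $n-2$, and then runs the long exact sequence of the pair $(U, B_{\bar f} \cap U)$ to transfer the vanishing of $H_c^{n+1}$ from the open $n$-manifold $U \setminus B_{\bar f}$ over to $U$. You never look at the branch set at all: you transport the dimension bound of the target $Y$ upward through $\bar f$, using only that $\bar f$ is proper and discrete, hence has finite fibres. The two arguments thus rest on different pillars --- the paper on the codimension-two smallness of $B_{\bar f}$ inside $X_f$, yours on the zero-dimensionality of the fibres of $\bar f$. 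What your route buys is generality and reusability: it proves that any locally compact Hausdorff space admitting a proper, finite-to-one map onto an $n$-dimensional space has cohomological dimension at most $n$, a statement making no reference to manifolds in the source, branch sets, or non-local-separation. The price is heavier machinery: extension by zero, the sheaf-coefficient characterization of dimension, the compactly supported Leray spectral sequence, and proper base change --- all of which are indeed valid in this setting, since $X_f$ and $Y$ are locally compact Hausdorff and $\bar f$ is proper, though the supporting references are Bredon or Iversen rather than anything already cited in the paper. The paper's route is more economical: it needs only the exact sequence of a pair, which is already set up in this section and reused immediately afterwards in the orientability theorem, plus the one bound imported from Borel. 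A minor remark: the finiteness of $\Deck(\bar f)$ that you note in passing is never actually used in your argument; all that matters is that each fibre $\bar f^{-1}\{y\}$ is finite and discrete.
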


\begin{proof} Let $B_{\bar f} \subset X_f$ be the branch set of the normalization map $\ol{f} \colon X_f \to Y$ of $f.$ Let $U \subset X_f$ be a connected open subset and $B_{\bar f \r U}$ the branch set of $\bar f \r U.$  The cohomological dimension of $B_{\bar f \r U}$ is at most $n-2$ by \cite{Borel-book}, since $B_{\bar f \r U}$ does not locally separate $U.$ Thus $H^{i}_c(B_{\bar f \r U}; \Z)=0$ for $i > n-2$ and the part
$$ \to H^{i-1}_c(B_{\bar f \r U}; \Z) \to H^i_c(U \setminus B_{\bar f \r U}; \Z) \to H^i_c(U; \Z) \to H^i_c(B_{\bar f \r U}; \Z)\to $$
of the long exact sequence of the pair $(U,B_{\bar f \r U})$ gives us an isomorphism $H^i_c(U \setminus B_{\bar f \r U}; \Z) \to H^i_c(U; \Z)$ for $i \geq n.$ Since $U \setminus B_{\bar f \r U}$ is a connected $n$-manifold, $H^{n+1}_c(U \setminus B_{\bar f \r U}; \Z)=0.$ Thus 
$H^{n+1}_c(U; \Z) \cong H^{n+1}_c(U \setminus B_{\bar f \r U}; \Z) =0.$ We conclude that $X_f$ has dimension $\leq n.$
\end{proof}

The $i$:th \textit{local Betti-number} $\rho^i(x)$ around $x$ is $k,$ if given a neighbourhood $U$ of $x,$ there exists open neighbourhoods $W \subset V \subset U$ with $\bar W \subset V$ and $\bar V \subset U$ so that $\mathrm{Im}(\tau_{VW'}^i)=\mathrm{Im}(\tau_{VW'}^i)$ and has rank $k.$ The space $X$ is called a \textit{Wilder manifold}, if $X$ is finite dimensional and for all $x \in X$ the local Betti-numbers satisfy $\rho^i(x)=0$ for all $i < n$ and $\rho^n(x)=1.$ 

A locally compact and locally connected Hausdorff space $X$ with cohomological dimension $\leq n$ is \textit{orientable}, if there exists for every $x \in X$ a neighbourhood basis $\mathcal{U}$ of $x$ so that $\mathrm{Im}(\tau^n_{XU})=\Z$ for all $U \in \mathcal{U},$ and locally orientable if every point in $X$ has an orientable neighbourhood.
  
\begin{Thm}\label{harmautta} Let $\bar f \colon X_f \to Y$ be a normalization map of a branched covering $f \colon X \to Y$ so that $Y$ is orientable. Then $X_f$ is orientable.
\end{Thm}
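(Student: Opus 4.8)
The plan is to verify the orientability condition at an arbitrary point $x \in X_f$ by exhibiting a neighbourhood basis of $x$ on which the push-forward into $H^n_c(X_f;\Z)$ has infinite cyclic image, and to do this by comparing every group in sight with the cohomology of the open manifold $X_f'$, on which $\bar f$ restricts to the covering $q' \colon X_f' \to Y'$ of the orientable manifold $Y' = Y \setminus f(B_f)$.

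First I would settle the global group $H^n_c(X_f;\Z)$. Since $q' \colon X_f' \to Y'$ is a connected normal covering of the orientable manifold $Y'$, the manifold $X_f'$ inherits an orientation by pulling local orientations back along the local homeomorphism $q'$; being connected it satisfies $H^n_c(X_f';\Z) \cong \Z$. The set $A := X_f \setminus X_f'$ is closed and, by hypothesis, does not locally separate $X_f$, so by Borel its cohomological dimension is at most $n-2$ and $H^{n-1}_c(A;\Z) = H^n_c(A;\Z) = 0$. Feeding this into the exact sequence of the pair $(X_f, A)$, exactly as in the proof of the dimension theorem, shows that $\tau^n_{X_f X_f'} \colon H^n_c(X_f';\Z) \to H^n_c(X_f;\Z)$ is an isomorphism; hence $H^n_c(X_f;\Z) \cong \Z$.

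Next I would build the neighbourhood basis. Writing $y = \bar f(x)$, I let $V$ range over a neighbourhood basis of $y$ consisting of connected orientable open subsets of $Y$, and take $D = D_V$ to be the $x$-component of $\bar f^{-1}(V)$. By the remark following Lemma \ref{1} these $D$ are normal neighbourhoods of $x$, and as $V$ shrinks they form a neighbourhood basis of $x$. For each $D$ set $D' := D \cap X_f' = D \setminus A$. Then $A \cap D$ does not locally separate $D$, so $D'$ is connected and $\tau^n_{D D'} \colon H^n_c(D';\Z) \to H^n_c(D;\Z)$ is an isomorphism by the pair sequence of $(D, A \cap D)$; moreover $D'$ is an open, hence orientable, connected submanifold of $X_f'$, so $H^n_c(D';\Z) \cong \Z$. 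Using the functoriality $\tau^i_{VX} = \tau^i_{XU} \circ \tau^i_{UV}$ I would then chase
$$\tau^n_{X_f D} \circ \tau^n_{D D'} = \tau^n_{X_f D'} = \tau^n_{X_f X_f'} \circ \tau^n_{X_f' D'}.$$
The map $\tau^n_{X_f' D'}$ is the push-forward, inside the connected oriented manifold $X_f'$, of its connected open piece $D'$, which carries a fundamental class to a fundamental class and is therefore an isomorphism $\Z \to \Z$; combined with the isomorphisms $\tau^n_{D D'}$ and $\tau^n_{X_f X_f'}$ this forces $\tau^n_{X_f D}$ to be an isomorphism onto $H^n_c(X_f;\Z)$. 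In particular $\mathrm{Im}(\tau^n_{X_f D}) \cong \Z$ for every $D$ in the basis, which is exactly the required orientability condition.

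The two appeals to the exact sequence of a pair are routine and identical in spirit to the dimension theorem, and the statement that the push-forward of a connected open piece of a connected oriented $n$-manifold is an isomorphism on top compactly supported cohomology is elementary. The step needing the most care — and the only place the hypothesis on $Y$ enters — is the orientability of $X_f'$: one must be sure that $Y$ orientable forces the covering space $X_f'$ to be orientable, after which the whole argument is a diagram chase resting on the non-local-separation of $A$.
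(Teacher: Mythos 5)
Your proposal is correct and follows essentially the same route as the paper: orient the manifold part as a cover of an orientable manifold, use the exact sequence of a pair together with the fact that the removed set is closed, non-locally-separating and of cohomological dimension $\leq n-2$ to see that the push-forwards $\tau^n_{X_f X_f'}$ and $\tau^n_{DD'}$ are isomorphisms, and finish by functoriality of push-forwards plus the standard isomorphism $H^n_c(D';\Z) \to H^n_c(X_f';\Z)$ for a connected open subset of a connected oriented $n$-manifold. The only (immaterial) difference is that you delete $A = X_f \setminus X_f'$ where the paper deletes the branch set $B_{\bar f}$, and you spell out the neighbourhood-basis bookkeeping slightly more explicitly than the paper does with its normal neighbourhoods $W$.
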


\begin{proof} The set $Y \setminus \bar f (B_{\bar f})$ is an open connected subset of the orientable manifold $Y.$ Thus $X_f \setminus B_{\bar f}$ is an orientable manifold as a cover of the orientable manifold $Y \setminus \bar f (B_{\bar f}).$ Thus $H_c^n(X_f';Z)=\Z.$ 

We show that $H_c^n(X_f;\Z)=\Z$ and that the push-forward $\tau_{X_fW}$ is an isomorphism for every $x \in X_f$ and normal neighbourhood $W$ of $x.$ The cohomological dimension of $B_{\bar f}$ is $\leq 2.$ Thus, by the long exact sequences of the pairs $(X_f,B_{\bar f})$ and $(W,B_{\bar f} \cap W),$ the push-forward homomorphisms $\tau^n_{X_f(X_f \setminus B_{\bar f})}$ and $\tau^n_{W(W \setminus B_{\bar f})}$ are isomorphisms. Since $W \setminus B_ {\bar f} \subset X_f \setminus B_{\bar f}$ is a connected open subset and $X_f \setminus B_{\bar f}$ is a connected orientable $n$-manifold $,$ the push-forward homomorphism $\tau_{(X_f \setminus B_{\bar f})((W \setminus B_{\bar f}))}$ is an isomorphism.  
Since $\tau^n_{X_fW} \circ \tau^n_{W(W \setminus B_{\bar f})}=\tau^n_{X_f(X_f \setminus B_{\bar f})} \circ \tau^n_{(X_f \setminus B_ {\bar f})(W \setminus B_{\bar f})},$ we conclude that $\tau^n_{X_fW}$ is an isomorphism and $\mathrm{Im}(\tau^n_{X_fW})=\mathrm{Im}(\tau^n_{X_fX_f'})\cong \Z.$
\end{proof}

We note that a similar argument shows that a monodromy space of a proper barnched covering between manifolds is always locally orientable. A \textit{cohomology manifold} in the sense of Borel \cite{Borel-book} is a locally orientable Wilder manifold.

\section{The monodromy space of branched covers between surfaces}\label{tm}
A \textit{surface} is a closed orientable $2$-manifold. The monodromy space related to a branched covering between surfaces is always a surface as mentioned in \cite{Fox}. We first present the proof of this fact in the case we use it for completion of presentation and then we show that there exists a branched cover $f \colon S^2 \to S^2$ so that $X_f\neq S^2$ towards proving Theorems \ref{eka} and \ref{kolmas}. 

\begin{Apulause}\label{r} Let $F$ be an orientable surface and $f \colon F \to S^2$ be a proper branched cover and $\overline{f} \colon X_f \to S^2$ the normalization of $f$. Then $X_f$ is an orientable surface.
\end{Apulause}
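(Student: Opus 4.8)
The plan is to reduce everything to showing that $X_f$ is a compact orientable $2$-manifold, since by definition a surface is a closed orientable $2$-manifold. Orientability will come for free from Theorem~\ref{harmautta} because $S^2$ is orientable, and compactness will come from properness of $\bar f$ together with compactness of $S^2$. So the genuine content is to prove that every point of $X_f$ is a manifold point, and the only points in doubt are those of the branch set $B_{\bar f}$.

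First I would record the structure away from the branch set. The set $S^2 \setminus \bar f(B_{\bar f})$ is an open connected orientable $2$-manifold, and $X_f' = X_f \setminus B_{\bar f}$ is a covering space of it, hence itself an orientable $2$-manifold. Moreover $B_{\bar f}$ has codimension at least two in a $2$-manifold, so it is a closed $0$-dimensional set, i.e.\ a discrete set of isolated points, and since $B_{\bar f}$ does not locally separate $X_f$, punctured neighbourhoods of its points remain connected.

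The heart of the argument is the local model at a branch point $w_0 \in B_{\bar f}$ with image $y_0 = \bar f(w_0)$. I would choose a small open disc $V \subset S^2$ around $y_0$ containing no other branch value, so that $V \setminus \{y_0\}$ is an annulus of the homotopy type of $S^1$. Passing to the $w_0$-component $D$ of $\bar f^{-1}(V)$, which is a normal neighbourhood of $w_0$ by the discussion following Lemma~\ref{fun}, the restriction $\bar f \r D \colon D \to V$ is a proper normal branched covering with $\bar f^{-1}\{y_0\} \cap D = \{w_0\}$. Removing the single point $w_0$ yields a covering $D \setminus \{w_0\} \to V \setminus \{y_0\}$, connected by the non-separation property and finite because $\bar f$ is proper. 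A connected finite covering of the circle-homotopy-type $V \setminus \{y_0\}$ is the cyclic one of some degree $k$, so $D \setminus \{w_0\}$ is an annulus and $\bar f \r D$ is, up to homeomorphism, the power map $z \mapsto z^k$ of the punctured disc.

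It then remains to see that filling in $w_0$ returns a disc, and this is where I expect the only real subtlety. The annulus $D \setminus \{w_0\}$ has exactly two ends; under the covering one end lies over $\partial V$ and the other over the puncture $y_0$. The Fox completion adjoins to the inner end exactly the single point $w_0$, forced by $\bar f^{-1}\{y_0\} \cap D = \{w_0\}$ together with non-separation, so that end is a one-point end and $D$ is homeomorphic to a disc with $w_0$ as centre. Hence $w_0$ is a manifold point and, $w_0$ being arbitrary, $X_f$ is a $2$-manifold; combined with Theorem~\ref{harmautta} and properness it is a closed orientable surface. The step needing care is precisely the identification of the completed inner end with a single manifold point rather than with some more complicated compactification — this is the content of Sto\"ilow's theorem in the present normal-branched-covering setting.
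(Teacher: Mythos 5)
Your proposal is correct and follows essentially the same route as the paper's proof of Lemma \ref{r}: orientability from Theorem \ref{harmautta}, compactness from properness/finite multiplicity, and at each branch point a normal neighbourhood mapping onto a disc meeting the branch values only at the centre, so that the punctured neighbourhood is a connected finite cyclic cover of a punctured disc (an annulus) and filling the point back in yields a disc. The only cosmetic difference is that the paper gets discreteness of the branch values from Sto\"ilow's theorem applied to $f$ itself, whereas your appeal to ``codimension at least two in a $2$-manifold'' for $B_{\bar f}\subset X_f$ is slightly circular (since $X_f$ is not yet known to be a manifold) but is repaired by exactly that Sto\"ilow argument, which you invoke anyway.
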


\begin{proof} Since $S^2$ is orientable, the space $X_f$ is orientable by Theorem \ref{harmautta}. Since the domain $F$ of $f$ is compact, the normalization $\overline{f}$ has finite multiplicity and the space $X_f$ is compact. Let $x \in X_f.$ By  Sto\"ilow's theorem, see \cite{Whyburn-book}, $f(B_f)=\ol{f}(B_{\ol{f}})$ is a discrete set of points. Thus there exists a normal neighbourhood $V \subset X_f$ of $x$ so that $\ol{f}(V) \cap f(B_{f}) \subset \{f(x)\}$ and $\overline{f}(V )\approx \R^2.$ Now $\overline{f}\r V \setminus \{x\} \colon V \setminus \{x\}  \to \overline{f}(V \setminus \{f(x)\})$ is a cyclic covering of finite multiplicity, since $\overline{f}(V \setminus \{f(x)\})$ is homeomorphic to the complement of a point in $\R^2.$ We conclude from this that $x$ is a manifold point of $X_f.$ Thus $X_f$ is a $2$-manifold and a surface.
\end{proof}

We record as a theorem the following result in the spirit of Fox \cite[p.255]{Fox}.

\begin{Thm}\label{s} Let $F$ be an orientable surface and $f \colon F \to S^2$ a proper branched cover and $\bar f \colon X_f \to S^2$ the normalization of $f$. Assume $|fB_f| > 3.$ Then $X_f \neq S^2.$
\end{Thm}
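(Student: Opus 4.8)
The plan is to compute the Euler characteristic of $X_f$ and show it is at most $0$, so that $X_f$, which is a compact orientable surface by Lemma \ref{r}, cannot be the $2$-sphere. First I would record the structure of the normalization as a normal branched cover: since $F$ is compact and $f$ is proper, the deck group $G := \Deck(\bar f)$ is finite, say of order $n$, and $X_f$ is a compact orientable surface. By construction $\bar f$ restricts to the unramified normal covering $q' \colon X_f' \to S^2 \setminus f(B_f)$ whose deck group is isomorphic to the monodromy group of $f'$; consequently the branch values of $\bar f$ are contained in $f(B_f)$.

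Next I would argue that every point $y \in f(B_f)$ is in fact a branch value of $\bar f$. Fixing such a $y$ and a small punctured disc neighbourhood of $y$ inside $S^2 \setminus f(B_f)$, the image $g_y \in G$ of the corresponding loop is the local monodromy around $y$. Because $y$ is the image of a genuine branch point of $f$, this loop already acts nontrivially on the fibre of $f'$, and since $G$ is isomorphic to the monodromy group of $f'$ we obtain $g_y \neq \id$. As $\bar f$ is normal, the sheets over $y$ all have the common local degree $m_y = \mathrm{ord}(g_y) \geq 2$, and there are exactly $n/m_y$ points of $X_f$ over $y$. Thus $\bar f$ has exactly $k := |f(B_f)| \geq 4$ branch values, each of local degree at least $2$.

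Finally I would triangulate $S^2$ so that every point of $f(B_f)$ is a vertex and lift the triangulation through $\bar f$. Every open simplex in $S^2 \setminus f(B_f)$ has exactly $n$ preimages, while each branch vertex $y$ has $n/m_y$ preimages; counting vertices, edges and faces yields
\begin{equation*}
\chi(X_f) = n\,\chi(S^2) - \sum_{y \in f(B_f)} n\Big(1 - \tfrac{1}{m_y}\Big) = 2n - \sum_{y \in f(B_f)} n\Big(1 - \tfrac{1}{m_y}\Big).
\end{equation*}
Using $m_y \geq 2$ for each of the $k \geq 4$ branch values, every summand is at least $n/2$, so $\chi(X_f) \leq 2n - kn/2 \leq 2n - 2n = 0 < 2 = \chi(S^2)$. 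Hence $X_f$ is a compact orientable surface of Euler characteristic at most $0$, and therefore $X_f \neq S^2$.

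I expect the middle step to be the main obstacle: verifying that no point of $f(B_f)$ loses its ramification upon passing to the normalization, i.e. that the local monodromy $g_y$ remains nontrivial in $G$. This rests on the identification of $G$ with the monodromy group of $f'$ recalled in the introduction, together with the fact that a genuine branch point of $f$ forces a cycle of length $>1$ in the corresponding monodromy permutation; the Euler characteristic count is then routine once all $k$ points are known to contribute.
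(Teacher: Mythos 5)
Your proof is correct and follows essentially the same route as the paper: a Riemann--Hurwitz count of $\chi(X_f)$, using normality of $\bar f$ to get a constant local degree $m_y \geq 2$ over each of the $|fB_f| \geq 4$ branch values, whence $\chi(X_f) \leq 2n - 4\cdot\tfrac{n}{2} = 0 < 2$. The only differences are cosmetic: you derive the formula by lifting a triangulation and you explicitly verify the identity $\bar f B_{\bar f} = f B_f$ via local monodromy (the paper simply asserts this equality and cites Riemann--Hurwitz), so your ``main obstacle'' step is exactly the point the paper leaves implicit.
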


\begin{proof} The space $X_f$ is $S^2$ if and only if the Euler characteristics $\chi(X_f)$ is $2$. By Riemann Hurwitz formula  
\begin{equation*}
\label{d}
\chi(X_f) = (\deg \bar f)\chi(S^2)-\sum_{x \in X_f}(i(x,\bar f)-1), 
\end{equation*}
where $i(x,\bar f)$ is the local index of $\bar f$ at $x$. Since $\ol{f}$ is a normal branched cover,  $i(x',\bar f)=i(x,\bar f)$ for $x,x' \in X_f$ with $\bar f (x)= \bar f (x').$ We define for all $y \in S^2,$
$$n(y):=i(x,\bar f),x \in \ol{f}^{-1}\{y\}.$$ 
Then for all $y \in S^2$ 
$$\deg \bar f = \sum_{x\in \bar f^{-1}\{y\}} i(x,\bar f) =n(y)|\bar f ^{-1}\{y\}|$$ and thus for all $y \in S^2$  
$$|\bar f^{-1}\{y\}|=\dfrac{\deg \bar f}{n(y)}.$$ Hence,
\begin{equation*}
\label{f}
\chi(X_f) = (\deg \bar f)\left(\chi(S^2)-\sum_{y \in \bar fB_{ \bar f}}\dfrac{n(y)-1}{n(y)}\right), 
\end{equation*}
where $\chi(S^2)=2$ and $(\deg \bar f):=N \in \N.$ Since $n(y)\geq 2$ for all $y \in \bar f B_{\bar f}=fB_f$ and $\dfrac{k-1}{k} \to 1$ as $k \to \infty,$ we get the estimate
\begin{equation*}
\chi(X_f) \leq N\left(2-|fB_f|\dfrac{1}{2}\right). 
\end{equation*}
Thus $\chi(X_f)\leq 0 < 2,$ since $|fB_f|\geq 4$ by assumption. Thus $X_f \neq S^2.$ 
\end{proof}

We end this section with two independent easy corollaries. 

\begin{Cor} Let $F$ be an orientable surface and $f \colon F \to S^2$ be a proper branched cover so that $|fB_f| > 3.$ Then $f$ is not a normal covering.
\end{Cor}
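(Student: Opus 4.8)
The plan is to argue by contradiction and reduce everything to Theorem \ref{s}. Suppose that $f$ is a normal branched covering. The crucial observation is that a normal branched covering is, up to homeomorphism over $S^2$, its own normalization: I claim $X_f$ is homeomorphic to $F$ by a homeomorphism intertwining $\bar f$ with $f$, and in particular $\bar f B_{\bar f} = fB_f$. Once this is in place, the hypothesis on the number of branch values transfers verbatim to $\bar f$, and Theorem \ref{s} applies directly to the normalization.

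To establish the identification, set $Y' = S^2 \setminus fB_f$ and let $f' = f \r F' \colon F' \to Y'$ be the regular part. By construction $X_f$ is the Fox completion of the connected normal cover $q' \colon X_f' \to Y'$ whose deck group realises the monodromy group of $f'$. If $f$ is normal, then $f'$ is already a normal covering, so $F' \to Y'$ is itself a connected normal cover whose deck group is the monodromy group of $f'$; by uniqueness of such a cover the map $p' \colon X_f' \to F'$ is a homeomorphism over $Y'$. Since Fox completions of proper coverings are unique (\cite{Fox}, \cite{Aaltonen}) and $F$ is the completion of $F' \to S^2$, this homeomorphism extends to a homeomorphism $X_f \cong F$ commuting with the maps to $S^2$, so that $\bar f$ is identified with $f$. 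Here the isomorphism \eqref{tf} together with Edmonds' orbit-map description \cite{Edmonds-Michigan} of normal branched covers is exactly what guarantees that the deck group of $q'$ coincides with that of $f'$, so that the completion introduces no new branch data.

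Granting this, $|\bar f B_{\bar f}| = |fB_f| > 3$, and Theorem \ref{s} applied to the normalization $\bar f \colon X_f \to S^2$ yields $X_f \neq S^2$. Combined with $X_f \cong F$, this is the statement that no such normal branched covering of $S^2$ by $S^2$ can exist, which gives the contradiction in the case carrying content. One may also bypass $X_f$ entirely and read the obstruction off the Riemann--Hurwitz identity used in the proof of Theorem \ref{s}: a normal branched cover $S^2 \to S^2$ satisfies $2 = \chi(X_f) = N\bigl(2 - \sum_y \tfrac{n(y)-1}{n(y)}\bigr)$, which forces $\sum_y \tfrac{n(y)-1}{n(y)} < 2$ and hence at most three branch values, since each summand is at least $\tfrac12$. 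The main obstacle is the first step, namely the careful verification that the Fox completion of the regular part of a normal branched covering recovers its domain; everything else is a direct appeal to Theorem \ref{s}.
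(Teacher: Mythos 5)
Your first step is right, and it is surely the intended one: if $f$ is normal, then its restriction over $S^2 \setminus fB_f$ is already a connected normal covering whose deck group realizes the monodromy group, so by uniqueness of the Fox completion the normalization is $f$ itself, i.e.\ $X_f \cong F$ with $\bar f$ identified with $f$. Theorem \ref{s} then yields $F \cong X_f \neq S^2$. But this is a contradiction only when $F = S^2$. The corollary is stated for an \emph{arbitrary} orientable surface $F$, and when $F$ has positive genus your argument terminates with the true, non-contradictory conclusion ``$F \neq S^2$'': it proves nothing in those cases. Folding this into the phrase ``the case carrying content'' is exactly where the gap lies, since the corollary does assert non-normality for every orientable $F$; your alternative Riemann--Hurwitz computation has the same restriction, as it starts from $\chi(X_f)=2$.

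Moreover, this gap cannot be closed, because the statement is false for positive genus: the hyperelliptic double cover $h \colon F \to S^2$ of a genus-$g$ surface (the quotient by the hyperelliptic involution) is a proper branched covering of degree $2$, hence normal --- a connected two-sheeted covering corresponds to an index-two, hence normal, subgroup --- and it has $|hB_h| = 2g+2 \geq 4 > 3$ branch values. (This is consistent with Theorem \ref{s}: there $X_h = F$ and $\chi(F) = 2-2g \leq 0$, so the theorem only says $F \neq S^2$.) So the claim is correct exactly in the form you actually proved it, namely with the additional hypothesis $F = S^2$ --- which is also the only case needed later, since Corollary \ref{89} and the proofs of Theorems \ref{eka} and \ref{kolmas} concern covers $S^2 \to S^2$ --- but as literally stated for all orientable surfaces it fails. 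A correct write-up should either add the hypothesis $F = S^2$ or record the counterexample explicitly, rather than silently proving the weaker statement.
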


\begin{Cor} Let $F$ be an orientable surface and $f \colon F \to S^2$ be a proper branched cover so that $|fB_f| < 3.$ Then $f$ is a normal covering.
\end{Cor}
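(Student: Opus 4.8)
The plan is to reduce the assertion to a fundamental-group computation and then apply the standard dictionary between normal (regular) coverings and normal subgroups. By the definition recalled in the preliminaries, $f$ is a normal branched covering exactly when its restriction $f' := f \r X' \colon X' \to Y'$ over the unbranched part is a normal covering, where $Y' := S^2 \setminus f(B_f)$ and $X' := f^{-1}(Y') = F \setminus f^{-1}(f(B_f))$. First I would record that, $F$ being a connected surface and $f^{-1}(f(B_f))$ being finite, the space $X'$ is connected; hence $f'$ is a covering between connected manifolds, and the criterion that $f'$ is normal if and only if $f'_*(\pi_1(X'))$ is a normal subgroup of $\pi_1(Y')$ is available.

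The heart of the argument is the observation that $\pi_1(Y')$ is abelian as soon as $|f(B_f)| < 3$. I would split into the three possibilities for the at-most-two-point set $f(B_f) \subset S^2$. If $f(B_f) = \es$, then $Y' = S^2$ and $\pi_1(Y') = 1$; if $|f(B_f)| = 1$, then $Y' \approx \R^2$ and again $\pi_1(Y') = 1$; and if $|f(B_f)| = 2$, then $Y'$ is homeomorphic to the open cylinder $S^1 \times \R$, so $\pi_1(Y') \cong \Z$. In each case the group $\pi_1(Y')$ is abelian.

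Since every subgroup of an abelian group is normal, the subgroup $f'_*(\pi_1(X')) \subset \pi_1(Y')$ is automatically normal, so $f'$ is a normal covering and therefore $f$ is a normal branched covering. I do not anticipate a real obstacle: the whole content is the elementary identification of $S^2$ minus at most two points with $S^2$, $\R^2$, or $S^1 \times \R$, together with the triviality of the normality condition inside an abelian fundamental group. The only places demanding a line of care are the connectedness of $X'$, which makes the covering-theoretic normality criterion directly applicable, and checking that the paper's definition of a normal branched covering is indeed phrased through the normality of $f'$.
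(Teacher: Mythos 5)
Your proof is correct and follows essentially the same route as the paper: both arguments hinge on the fact that $\pi_1(S^2 \setminus fB_f)$ is cyclic (hence abelian) when $|fB_f| < 3$, so that normality of the relevant subgroup is automatic. The only cosmetic difference is that you apply the normal-subgroup criterion directly to $f'_*(\pi_1(X')) \subset \pi_1(Y')$, while the paper phrases the same observation through the monodromy group and the deck-transformation group of the normalization $\bar f$.
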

\begin{proof} Since the first fundamental group of $S^2\setminus fB_f$ is cyclic, the monodromy group of $f \r F \setminus f^{-1}(fB_f) \colon F \setminus f^{-1}(fB_f) \to S^2\setminus fB_f$ is cyclic. Thus every subgroup of the deck-transformation group of the normalization map $\bar f \colon X_f \to X$ is a normal subgroup. Thus $f=\bar f$ and in particular, $f$ is a normal branched covering. 
\end{proof}

\section{The suspension of a branched cover between orientable surfaces}\label{ts}

In this section we prove Theorem \ref{eka} in the introduction and the following theorem. 

\begin{Thm}\label{kolmas}
There exists a simplicial branched cover $f \colon S^3 \to S^3$ for which the monodromy space $X_f$ is not a cohomology manifold.
\end{Thm}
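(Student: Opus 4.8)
The strategy is to take the branched cover $f\colon S^2\to S^2$ with $\lvert fB_f\rvert>3$ produced in Section \ref{tm} (whose monodromy space $X_f$ satisfies $X_f\neq S^2$ by Theorem \ref{s}, hence is a surface of genus $\geq 1$ by Lemma \ref{r}), and to pass to its unreduced suspension $\Sigma f\colon \Sigma S^2=S^3\to \Sigma S^2=S^3$. The first task is to check that $\Sigma f$ is again a proper simplicial branched cover between $3$-manifolds; this is a local verification away from the two suspension points, where $\Sigma f$ looks like $f\times\id$ on $S^2\times(0,1)$, and a separate check at the cone points, where openness and discreteness follow from those of $f$. The second and central task is to identify the monodromy space $X_{\Sigma f}$ of $\Sigma f$ with the suspension $\Sigma X_f$ of the monodromy space of $f$, by showing that the normalization of $\Sigma f$ is exactly $\Sigma\bar f\colon \Sigma X_f\to S^3$. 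This is where I expect the main work to lie.

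First I would set up the identification $X_{\Sigma f}\approx \Sigma X_f$. Off the suspension points the covering data of $\Sigma f$ is literally that of $f$ crossed with an interval, so the deck group and the monodromy coincide with those of $f$; since suspension does not change $\pi_1$ of the relevant complement in the orientation-preserving setting and the Fox completion is determined by the local monodromy around the branch set, the normalization is built from $\Sigma\bar f$. I would make this precise using the local structure near the two cone points: a normal neighbourhood of a cone point in $X_{\Sigma f}$ is the cone over a normal neighbourhood structure in $X_f$, so $\bar f$ completes to $\Sigma\bar f$ and $X_{\Sigma f}=\Sigma X_f$. The key input is that $X_f$ is a closed orientable surface of genus $g\geq 1$.

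Granting $X_{\Sigma f}\approx\Sigma X_f$ with $X_f$ a genus-$g\geq1$ surface, the final step is the cohomological computation showing $\Sigma X_f$ is not a cohomology manifold in the sense of Borel, i.e.\ not a locally orientable Wilder manifold. Orientability holds by Theorem \ref{harmautta}, so the obstruction must be the Wilder condition at a suspension point $v$. I would compute the local Betti numbers $\rho^i(v)$ using normal neighbourhoods of $v$, which are open cones on $X_f$: by the exact sequence of the pair and the push-forward maps $\tau^i$, the images $\mathrm{Im}(\tau^i_{VW})$ are controlled by $H^i_c$ of the open cone, equivalently by $\tilde H^{i-1}(X_f;\Z)$. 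Since $X_f$ has genus $g\geq1$ we get $H^1(X_f;\Z)=\Z^{2g}\neq 0$, forcing a nonzero local Betti number $\rho^{i}(v)$ in an intermediate degree $i<n=3$, which violates the Wilder condition $\rho^i(v)=0$ for $i<n$. Hence $\Sigma X_f=X_{\Sigma f}$ is not a cohomology manifold.

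The hard part will be the clean identification of the normalization of $\Sigma f$ with $\Sigma\bar f$ and of $X_{\Sigma f}$ with $\Sigma X_f$, since this requires tracking the Fox completion through the suspension and verifying that the monodromy group is unchanged at the cone points; once this is in hand, the local cohomology at the suspension point is a direct Mayer--Vietoris / cone computation that detects $H^1(X_f;\Z)$ and breaks the Wilder condition.
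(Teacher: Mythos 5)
Your proposal is correct and follows essentially the same route as the paper: suspend a branched cover $f \colon S^2 \to S^2$ with $|fB_f|>3$, identify $X_{\Sigma f}$ with $\Sigma X_f$ (the paper's Lemma \ref{t}), and detect the failure of the Wilder condition at a cone point via $H^2_c$ of the open cone, which is isomorphic to $H^1_c(X_f;\Z)=\Z^{2g}\neq 0$ (the paper's Lemma \ref{p}). The only cosmetic difference is that the paper constructs the required $f$ explicitly as a composition of two winding maps and routes the conclusion through Corollary \ref{89}, while you cite the existence from Section \ref{tm}.
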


More precisely, we show that there exists a branched cover $S^2 \to S^2$ for which the monodromy space is not a manifold or a cohomology manifold for the suspension map $\Sigma S^2 \to \Sigma S^2.$ 

Let $F$ be an orientable surface. Let $\sim$ be the equivalence relation in $F \times [-1,1]$ defined by the relation $(x,t) \sim (x',t)$ for $x,x' \in F$ and $t \in \{-1,1\}.$ Then the quotient space $\Sigma F:=F \times [-1,1]/\sim$ is the \textit{suspension space} of $F$ and the subset $CF:=\{\overline{(x,t)}: x \in F, t \in [0,1]\} \subset \Sigma F$ is the \textit{cone} of $F.$ We note that $\Sigma S^2 \approx S^3.$ Let $f \colon F_1 \to F_2$ be a piecewise linear branched cover between surfaces. Then $\Sigma f \colon \Sigma F_1 \to \Sigma F_2, \overline{(x,t)} \mapsto \overline{(f(x),t)},$ is a piecewise linear branched cover and called the \textit{suspension map} of $f.$ We note that the suspension space $\Sigma F$ is a polyhedron and locally contractible for all surfaces $F.$

We begin this section with a lemma showing that the normalization of a suspension map of a branched cover between surfaces is completely determined by the normalization of the original map. 

\begin{Apulause}\label{t} Let $F$ be an orientable surface and $f \colon F \to S^2$ a branched cover and $\bar f \colon X_f \to S^2$ the normalization of $f.$ Then $\Sigma \bar f \colon \Sigma X_f \to \Sigma S^2$ is the normalization of $\Sigma f \colon \Sigma F \to \Sigma S^2.$
\end{Apulause}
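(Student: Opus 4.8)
The plan is to use the characterization of the normalization as the Fox completion of the normal covering over the regular part whose deck group realizes the monodromy group of $\Sigma f$, together with the uniqueness of the Fox completion. First I would record the branch data. Writing $N, S \in \Sigma S^2$ for the two suspension points, the branch set of $\Sigma f$ is $\Sigma B_f$ with image $\Sigma(fB_f)$, and likewise $B_{\Sigma \bar f} = \Sigma B_{\bar f}$ with image $\Sigma(\bar f B_{\bar f}) = \Sigma(fB_f)$. Thus both $\Sigma f$ and $\Sigma \bar f$ are unbranched precisely over $Z' := \Sigma S^2 \setminus \Sigma(fB_f)$, so the regular parts of the two suspensions sit over the same base.

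The key observation is a product description of these regular parts. Removing the suspension points gives $\Sigma S^2 \setminus \{N,S\} \approx S^2 \times (-1,1)$, and under this identification $\Sigma(fB_f)$ corresponds to $fB_f \times (-1,1)$, so $Z' \approx (S^2 \setminus fB_f) \times (-1,1) = Y' \times (-1,1)$. The analogous identifications upstairs turn the regular part of $\Sigma f$ into $f' \times \id$ and that of $\Sigma \bar f$ into $\bar f' \times \id = q' \times \id$, because $\Sigma f(\overline{(x,t)}) = \overline{(f x,t)}$. Since the interval factor does not change fundamental groups, the monodromy group of $(\Sigma f)'$ coincides with that of $f'$, and $q' \times \id$ is the normal covering of $Y' \times (-1,1)$ associated with the normal core of $f'_* \pi_1(X')$; that is, the regular part of $\Sigma \bar f$ is exactly the normal covering whose deck group is the monodromy group of $(\Sigma f)'$. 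Hence it is the covering whose Fox completion, by definition, is the normalization of $\Sigma f$.

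It then remains to recognize $\Sigma \bar f$ itself as that Fox completion. Here I would check that $\Sigma \bar f$ is a proper normal branched cover factoring through $\Sigma f$: properness follows from compactness of $\Sigma X_f$; the functoriality $\Sigma \bar f = \Sigma f \circ \Sigma p$ provides the factorization through the branched cover $\Sigma p$; and normality follows since $\Deck(\bar f)$ acts transitively on each fiber $\bar f^{-1}(y)$ (as $\bar f$ is normal), whence the suspended maps $\{\Sigma \tau : \tau \in \Deck(\bar f)\}$ act transitively on each regular fiber of $\Sigma \bar f$. Granting that $\Sigma \bar f$ is a completed covering, i.e.\ a proper branched cover whose added set $\Sigma X_f \setminus (\Sigma X_f)'$ does not locally separate $\Sigma X_f$, the uniqueness of the Fox completion of $q' \times \id$ identifies $\Sigma \bar f$ with the normalization of $\Sigma f$, completing the argument.

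The main obstacle I anticipate is precisely this last verification: that suspension commutes with Fox completion, or concretely that $\Sigma X_f \setminus (\Sigma X_f)'$ does not locally separate $\Sigma X_f$ and that $\Sigma \bar f$ is open and discrete also at the two suspension points of $\Sigma X_f$. Along the suspended branch arcs this is routine from the interior product structure, but at the suspension points the map looks like the cone on $\bar f$, so the local analysis must use that $X_f$ is an orientable surface, as established in Lemma \ref{r}, to control the cone and confirm that no extra local separation is introduced.
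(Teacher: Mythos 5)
Your argument is correct in outline, but it takes a genuinely different route from the paper's. The paper's proof stays entirely at the level of deck groups: it asserts that $\Sigma \bar f$ is a normal branched cover factoring as $\Sigma f \circ \Sigma p$, that restriction $\tau \mapsto \tau|_{X_f}$ gives an isomorphism $\varphi \colon \Deck(\Sigma \bar f) \to \Deck(\bar f)$, and then invokes the minimality characterization of the normalization: if $G \subset \Deck(\Sigma p)$ is a subgroup for which the quotient of $\Sigma X_f$ by $G$ is still normal over $\Sigma S^2$, then (since suspension commutes with taking quotients) the subgroup $G' = \varphi(G) \subset \Deck(p)$ has the same property for $f$, so $G'$, and hence $G$, is trivial. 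You instead return to the definition of the normalization as the Fox completion of the monodromy covering over the regular part: you identify the regular parts with products $Y' \times (-1,1)$, $X' \times (-1,1)$, $X_f' \times (-1,1)$, note that the interval factor changes neither $\pi_1$ nor the normal core, so the regular part of $\Sigma \bar f$ is exactly the required monodromy covering, and conclude by uniqueness of the Fox completion. The paper's route buys brevity: it never touches the completion machinery or the local structure at the suspension points. Your route buys transparency: it shows where the covering data lives and why no new monodromy can appear, and it isolates the actual geometric content in the verification you flag. That flagged step is genuinely needed but routine, and your sketch is the right one: the added set $\Sigma X_f \setminus (X_f' \times (-1,1))$ is the suspension of the finite set $\bar f^{-1}(fB_f)$, so away from the suspension points it consists of finitely many arcs sitting in a product of a surface with an interval, while at a suspension point a punctured cone neighbourhood minus these arcs is $(X_f \setminus \bar f^{-1}(fB_f)) \times (0,1)$, which is connected because $X_f$ is a connected surface (Lemma \ref{r}); openness and discreteness at the suspension points follow from surjectivity of $\bar f$ and compactness of $X_f$. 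It is worth noting that the paper's shorter proof implicitly assumes the analogous facts when it asserts, without argument, that $\Sigma \bar f$ is a normal branched cover with $\Deck(\Sigma \bar f) \cong \Deck(\bar f)$, so the two proofs differ mainly in which part of the work is made explicit.
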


\begin{proof}Let $p \colon X_f \to F$ be a normal branched covering so that $\bar f=f \circ p.$  Then $\Sigma \bar f$ is a normal branched cover so that $\Sigma \bar f = \Sigma p \circ \Sigma f$ and $\varphi \colon \mathrm{Deck}(\Sigma \bar f) \to \mathrm{Deck}(\bar f), \tau \mapsto \tau \r X_f$ is an isomorphism. We need to show that, if $G \subset \mathrm{Deck}(\Sigma \bar f)$ is a subgroup so that $f \circ (\Sigma p/G)$ is normal. Then $G$ is trivial.

Suppose $G \subset \mathrm{Deck}(\Sigma \bar f)$ is a group so that $f \circ \Sigma/G$ is normal. Then $f \circ (p/G')$ is normal for the quotient map $p/G' \colon X_f/G' \to S^2,$ where $G'=\varphi(G).$ Since $\bar f$ is the normalization of $f,$ the group $G'$ is trivial. Thus $G=\varphi^{-1}(G')$ is trivial, since $\varphi^{-1}$ is an isomorphism.
\end{proof}

We then characterize the surfaces for which the suspension space is a manifold or a cohomology manifold in the sense of Borel.

\begin{Apulause}\label{y}Let $F$ be an orientable surface. Then $\Sigma F$ is a manifold if and only if $F=S^2.$
\end{Apulause}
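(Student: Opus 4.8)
The plan is to analyze the local structure of the suspension space $\Sigma F$ at each of its points and determine exactly when every point is a manifold point. The key observation is that $\Sigma F$ is obtained from the product $F \times [-1,1]$ by collapsing the two ends $F \times \{-1\}$ and $F \times \{1\}$ to points, which I will call the two suspension vertices. Away from these two vertices, $\Sigma F$ is homeomorphic to the open product $F \times (-1,1)$, which is a $3$-manifold whenever $F$ is a surface; so every point of $\Sigma F$ except possibly the two suspension vertices is automatically a manifold point. Hence the whole question reduces to understanding the local topology of $\Sigma F$ at the suspension vertices.

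The main step is to identify the link of a suspension vertex with $F$ itself. A suspension vertex has a neighbourhood homeomorphic to the open cone $CF = (F \times [0,1))/(F \times \{0\})$ on $F$, so the vertex is a manifold point of $\Sigma F$ if and only if the open cone on $F$ is homeomorphic to $\R^3$. By the standard characterization of manifold points in terms of local homology, this holds if and only if $F$ has the homology of $S^2$ and, more precisely, if and only if $CF$ is a manifold neighbourhood. I would compute the local homology $H_*(\Sigma F, \Sigma F \setminus \{v\}; \Z)$ at a suspension vertex $v$ using excision and the cone structure: this reduces to the reduced homology $\widetilde{H}_{*-1}(F; \Z)$. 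For $\Sigma F$ to be a $3$-manifold at $v$ we need these local homology groups to agree with those of $\R^3$, i.e. $\widetilde H_i(F;\Z)$ must vanish for $i \neq 2$ and equal $\Z$ for $i = 2$; for an orientable surface $F$ this forces $F = S^2$.

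The hard part will be the converse direction and the care needed to pass from a homological statement to a genuine topological manifold conclusion. For the forward direction I must argue that if the local homology at $v$ is that of a manifold point, then $F$ has the homology of the $2$-sphere, and then invoke the classification of closed orientable surfaces to conclude $F \approx S^2$. For the reverse direction, when $F = S^2$, I would use $\Sigma S^2 \approx S^3$, which is already recorded in the text, so that $\Sigma F$ is a manifold. The genuinely delicate point is that being a manifold point is an a priori stronger condition than having manifold-like local homology, so I should phrase the forward argument as: if $v$ is a manifold point then in particular its local homology matches that of $\R^3$, which yields the homological constraint on $F$; combined with the surface classification this pins down $F = S^2$, and no converse implication in the subtle direction is actually required.

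Thus the proof organizes into: first dispose of all non-vertex points as manifold points via the product structure; second, identify vertex neighbourhoods with cones and compute local homology as the suspension homology of $F$; third, extract the constraint $\widetilde H_*(F;\Z) \cong \widetilde H_*(S^2;\Z)$ and apply the classification of orientable surfaces to get $F \approx S^2$; and finally note that $\Sigma S^2 \approx S^3$ gives the reverse implication.
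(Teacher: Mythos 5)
Your proposal is correct, but it takes a genuinely different route from the paper. The paper also reduces everything to the cone (suspension) vertex, but then argues with the fundamental group rather than local homology: it takes the contractible cone neighbourhood $V$ of the vertex $x$, notes that $V \setminus \{x\}$ deformation retracts onto $F$, so $\pi_1(V \setminus \{x\}) \cong \pi_1(F) \neq 0$ when $F \neq S^2$, whereas if $\Sigma F$ were a $3$-manifold then deleting a point could not change the trivial fundamental group of $V$ --- a contradiction. Your argument instead computes the local homology $H_*(\Sigma F, \Sigma F \setminus \{v\};\Z) \cong \widetilde{H}_{*-1}(F;\Z)$ by excision and the cone structure, compares it with $H_*(\R^3,\R^3\setminus\{0\};\Z)$, and uses $H_1(F;\Z)=\Z^{2g}$ plus the classification of closed orientable surfaces to force $F = S^2$; the converse direction ($\Sigma S^2 \approx S^3$) is identical in both proofs. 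The two arguments consume comparable input, but yours replaces the paper's general-position fact that removing a point from a $3$-manifold does not change $\pi_1$ locally --- a step the paper asserts without justification --- by standard excision and long-exact-sequence machinery, which is arguably more self-contained; you also correctly isolate the delicate asymmetry that manifold points have Euclidean local homology but not conversely (exactly the asymmetry the paper's later examples exploit), and you only use the valid direction. One caveat: your intermediate assertion that the vertex is a manifold point ``if and only if'' the open cone on $F$ is homeomorphic to $\R^3$ is stronger than what you need and is not justified as stated (a manifold point only requires \emph{some} Euclidean neighbourhood, not that the cone itself be $\R^3$); since your actual argument never uses this equivalence, it should simply be deleted.
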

\begin{proof} Suppose $F=S^2.$ Then $\Sigma F \approx S^3.$ Suppose then that $F\neq S^2.$ Then there exists a (cone) point $x \in \Sigma F$ and a contractible neighbourhood $V \subset \Sigma F$ of $x$ so that $F \subset V$ and $V \setminus \{x\}$ contracts to $F.$ Now $\pi_1(V \setminus \{x\},x_0) \cong \pi_1(F,x_0)\neq 0$ for $x_0 \in F.$ Suppose that $\Sigma F$ is a $3$-manifold. Then $\pi_1(V \setminus \{x\},x_0)=\pi_1(V,x_0)=0,$ which is a contradiction. Thus $\Sigma F$ is not a manifold. 
\end{proof}

\begin{Apulause}\label{p} Let $F$ be an orientable surface. Then $\Sigma F$ is not a Wilder manifold if $F\neq S^2.$
\end{Apulause}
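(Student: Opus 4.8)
The plan is to locate the obstruction at one of the two cone points of $\Sigma F$ and to compute the local Betti numbers there directly. Away from the cone points $\Sigma F$ is the manifold $F\times(-1,1)$, so every such point has the local Betti numbers of a $3$-manifold and satisfies the Wilder condition; moreover $\dim\Sigma F=3$, so $\Sigma F$ is finite dimensional. Writing $v$ for a cone point, the sets $N_s:=\{\ol{(x,t)}: t>s\}$, $s\in(-1,1)$, form a neighbourhood basis of $v$, each homeomorphic to the open cone on $F$. Since the Wilder condition requires $\rho^i(v)=0$ for every $i<3$, it suffices to exhibit one index $i<3$ with $\rho^i(v)\neq0$.

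First I would compute $H^i_c(N_s;\Z)$. The one-point compactification of the open cone $N_s=F\times(s,1]/(F\times\{1\})$ collapses both ends (the apex $v$ and the open end $t\to s^+$) and yields the suspension $\Sigma F$ itself, so $H^i_c(N_s;\Z)\cong\widetilde H^i(\Sigma F;\Z)\cong\widetilde H^{i-1}(F;\Z)$ by the suspension isomorphism. In particular $H^2_c(N_s;\Z)\cong H^1(F;\Z)\cong\Z^{2g}$, where $g$ denotes the genus of $F$.

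Next I would show that the push-forward $\tau^i_{N_{s_1}N_{s_2}}$ between nested cone neighbourhoods $N_{s_2}\subset N_{s_1}$ (that is, $s_1<s_2$) is an isomorphism. The complement $A:=N_{s_1}\setminus N_{s_2}=\{s_1<t\le s_2\}$ is closed in $N_{s_1}$ and homeomorphic to $F\times(s_1,s_2]$; its one-point compactification collapses the single open end $t\to s_1^+$ to a point and yields the contractible closed cone $CF$, so $H^i_c(A;\Z)\cong\widetilde H^i(CF;\Z)=0$ for all $i$. Feeding this into the exact sequence of the pair $(N_{s_1},A)$,
$$\to H^i_c(N_{s_2};\Z)\xrightarrow{\ \tau^i_{N_{s_1}N_{s_2}}\ } H^i_c(N_{s_1};\Z)\to H^i_c(A;\Z)\to$$
forces $\tau^i_{N_{s_1}N_{s_2}}$ to be an isomorphism. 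The identical computation applies to every intermediate cone neighbourhood $W'$ with $N_{s_2}\subset W'\subset N_{s_1}$, so the images $\mathrm{Im}(\tau^i_{VW'})$ are all equal; and since cone neighbourhoods are cofinal, for any given $U$ one may choose $W\subset V\subset U$ among the $N_s$. Hence the stable image is $H^i_c(V;\Z)$ and $\rho^i(v)=\operatorname{rank}\widetilde H^{i-1}(F;\Z)$.

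Finally, taking $i=2$ gives $\rho^2(v)=\operatorname{rank}H^1(F;\Z)=2g$ (whereas $\rho^0(v)=\rho^1(v)=0$ and $\rho^3(v)=1$, consistent with the local orientability already established). When $F\neq S^2$ we have $g\ge1$, so $\rho^2(v)=2g\neq0$ with $2<3=\dim\Sigma F$, violating the requirement $\rho^i(x)=0$ for $i<n$ in the definition of a Wilder manifold; therefore $\Sigma F$ is not a Wilder manifold. The step needing the most care is the vanishing $H^i_c(A;\Z)=0$ for the collar $A\cong F\times(s_1,s_2]$, since it is precisely this vanishing that makes the push-forwards isomorphisms and pins down the stable local Betti number at the cone point; the remainder is routine bookkeeping with the exact sequence of a pair and the suspension and one-point-compactification isomorphisms.
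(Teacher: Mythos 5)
Your proof is correct, and its overall strategy coincides with the paper's: both locate the failure of the Wilder condition at the cone point by showing that the second local Betti number there is $2g\neq 0$, using nested open cone neighbourhoods and stabilization of the push-forward maps between them. The differences lie in how the two computational ingredients are justified. For the value of $H^2_c$ of a cone neighbourhood, the paper runs the exact sequence of the pair $(CF,F)$ together with $H^1_c(F;\Z)\cong\Z^{2g}$ from Poincar\'e duality, whereas you pass to the one-point compactification (which is $\Sigma F$) and invoke the suspension isomorphism; these are two routes to the same group. For the stabilization, the paper asserts that the inclusion of one cone neighbourhood into a larger one is properly homotopic to the identity, so the push-forwards are isomorphisms; you instead use the exact sequence of the pair with the closed collar $A\cong F\times(s_1,s_2]$ and the vanishing $H^*_c(A;\Z)=0$. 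Your collar argument is, if anything, the more careful of the two: the inclusion $N_{s_2}\hookrightarrow N_{s_1}$ is not a proper map, so the paper's homotopy-invariance appeal needs exactly the kind of repair that your exact-sequence argument supplies automatically. One small point to tighten: the definition of $\rho^i$ quantifies over \emph{all} open neighbourhoods $W'\subset W$ of the cone point, not only cone-shaped ones; for an arbitrary $W'$ you should state explicitly (as the paper does) that choosing a cone neighbourhood $N_{s'}\subset W'$ and factoring $\tau_{VN_{s'}}=\tau_{VW'}\circ\tau_{W'N_{s'}}$ shows $\tau_{VW'}$ is surjective, so its image is all of $H^2_c(V;\Z)$ and has rank $2g$. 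With that sentence added, your argument is complete.
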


\begin{proof} We show that then the second local Betti-number is non-trivial around a point in $\Sigma F.$ Let $CF \subset \Sigma F$ be the cone of $F.$ Let $\pi \colon  F \times [0,1] \to \Sigma F, (x,t) \mapsto \overline{(x,t)},$ be the quotient map to the suspension space and $\bar x= \pi (F \times \{1\}).$ We first note that $H_c^1(CF,\Z)=H_c^2(CF,\Z)=0,$ since $CF$ contracts properly to a point. Further, by Poincare duality $H_c^1(F; \Z)=\Z^{2g},$ where $g$ is the genus of $F.$ In the exact sequence of the pair $(CF,F)$ we have the short exact sequence
$$\to 0 \to H^2_c(CF \setminus F; \Z) \to H_c^1(F; \Z) \to 0.$$
Thus $H^2_c(CF \setminus F; \Z) \cong H_c^1(F; \Z)$ and $H^2_c(CF \setminus F; \Z)=0$ if and only if $g=0$ for the genus $g$ of $F.$ Thus $H^2_c(CF \setminus F; \Z)=0$ if and only if $F=S^2.$

We then show that the rank of $H^2_c(CF \setminus F; \Z)$ is the local Betti-number $\rho^2(\bar x)$ around $\bar x.$ For this it is sufficient to show that given any neighbourhood $U\subset CF$ of $\bar x,$ there exists open neighbourhoods $W \subset V \subset U$ of $\bar x$ with $\bar W \subset V, \bar V \subset U,$ so that for any open neighbourhood $W' \subset W$ of $\bar x,$ $\mathrm{Im}(\tau_{VW})=\mathrm{Im}(\tau_{VW'})\cong H^2_c(CF \setminus F; \Z).$ 

Denote $\Omega_{t}=\varphi(F \times [0,t))$ for all $t \in (0,1).$ We note that then $\tau_{\Omega_s \Omega_t}$ is an isomorphism for all $t,s \in \R, t<s,$ since $\iota \colon \Omega_t \to \Omega_s$ is properly homotopic to the identity. Let $U \subset CF$ be any neighbourhood of $\bar x.$ We set $V=\Omega_t$ for such $t \in (0,1)$ that $\Omega_t \subset U$ and $W=\Omega_{t/2}.$ Then for any neighbourhood $W' \subset \Omega_t$ of $\bar x$ there exists $t' \in (0,t/2)$ so that $\Omega_{t'} \subset W'.$ Now $\tau_{\Omega_tW'}$ is surjective, since $\tau_{\Omega_t\Omega_{t'}}=\tau_{\Omega_t W'} \circ \tau_{W'\Omega_ {t'}}$ is an isomorphisms. Thus $$\mathrm{Im}(\tau_{\Omega_t\Omega_{t/2}})=\mathrm{Im}(\tau_{\Omega_t\Omega_{t'}})= H^2_c(\Omega_t; \Z) \cong H^2_c(CF \setminus F;\Z),$$
since $\tau_{\Omega_t\Omega_{t/2}}$ and $\tau_{\Omega_t(CF/F)}$ are isomorphisms. Thus $\rho^2(\bar x)\neq 0,$ since $F \neq S^2.$ 
\end{proof}

\begin{Cor}\label{89} Let $f \colon S^2 \to S^2$ be a branched cover with $|fB_f|> 3,$ $\Sigma f \colon S^3 \to S^3$ the suspension map of $f$ and $\overline{\Sigma f} \colon X_{\Sigma f} \to S^3$ the normalization of $\Sigma f.$ Then $X_{\Sigma f}$ is not a manifold and not  a Wilder manifold.   
\end{Cor}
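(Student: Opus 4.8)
The plan is to combine the three preceding lemmas, which have already done essentially all the work, and simply verify that their hypotheses are met for the suspension map $\Sigma f$. The statement concerns the normalization $\overline{\Sigma f} \colon X_{\Sigma f} \to S^3 = \Sigma S^2$, so the first step is to identify this normalization explicitly. By Lemma \ref{t}, since $f \colon S^2 \to S^2$ is a branched cover with normalization $\bar f \colon X_f \to S^2$, the suspension $\Sigma \bar f \colon \Sigma X_f \to \Sigma S^2$ is the normalization of $\Sigma f$. Hence up to the canonical homeomorphism $\Sigma S^2 \approx S^3$ we may identify $X_{\Sigma f}$ with $\Sigma X_f$, and it suffices to show that $\Sigma X_f$ is neither a manifold nor a Wilder manifold.

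The second step is to control the topology of the surface $X_f$ being suspended. The hypothesis $|fB_f| > 3$ is exactly the hypothesis of Theorem \ref{s}, so by that theorem $X_f \neq S^2$. (Here I would note that $X_f$ is indeed an orientable surface by Lemma \ref{r}, so that the surface lemmas of the previous section apply and $X_f$ is a genuine closed orientable surface of positive genus.) This is the only place where the numerical assumption $|fB_f| > 3$ enters: it guarantees that the suspended surface is not a sphere.

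The third and final step is to feed $X_f \neq S^2$ into Lemmas \ref{y} and \ref{p}. Applying Lemma \ref{y} to the orientable surface $F = X_f$ gives that $\Sigma X_f$ is a manifold if and only if $X_f = S^2$; since $X_f \neq S^2$, the space $\Sigma X_f \approx X_{\Sigma f}$ is not a manifold. Applying Lemma \ref{p} to $F = X_f$ gives that $\Sigma X_f$ is not a Wilder manifold, again precisely because $X_f \neq S^2$. Combining these two conclusions yields the statement of the corollary.

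I expect no serious obstacle here, as the corollary is assembled entirely from results already in hand; the content of the proof is bookkeeping rather than new argument. The one point requiring a little care is the identification in the first step: one must use Lemma \ref{t} to replace the abstractly defined monodromy space $X_{\Sigma f}$ with the concrete suspension $\Sigma X_f$, and check that $X_f$ is an orientable surface so that Lemmas \ref{y} and \ref{p} are applicable. Once that identification is made, the remaining steps are immediate invocations of the cited lemmas.
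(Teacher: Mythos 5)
Your proposal is correct and follows essentially the same route as the paper's own proof: identify $X_{\Sigma f}=\Sigma X_f$ via Lemmas \ref{r} and \ref{t}, use Theorem \ref{s} with the hypothesis $|fB_f|>3$ to get $X_f \neq S^2$, and then invoke Lemmas \ref{y} and \ref{p} to rule out manifold and Wilder manifold structure. Your write-up is in fact slightly more careful than the paper's, which contains typos conflating $X_f$ with $X_{\Sigma f}$ in its concluding sentences.
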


\begin{proof} By Lemmas \ref{r} and \ref{t} we know that $X_f$ is a surface and $X_{\Sigma f}=\Sigma X_f.$ Further, by Lemma \ref{s} we know that $X_f \neq S^2,$ since $|fB_f|> 3.$ Thus, by Lemma \ref{y}, $X_f$ is not a manifold. Further, by Lemma \ref{p}, $X_{\Sigma f}$ is not a Wilder manifold. Thus $X_f$ is not a cohomology manifold in the sense of Borel.
\end{proof}

\begin{proof}[Proof of Theorems \ref{eka} and \ref{kolmas}] By Corollary \ref{89} it is sufficient to show that there exists a branched cover $f \colon S^2 \to S^2$ so that $|f(B_{f})|=4.$ Such a branched cover we may easily construct as $f=f_1 \circ f_2,$ where $f_i \colon S^2 \to S^2$ is a winding map with branch points $x_1^i$ and $x_1^i$ for $i \in \{1,2\}$ satisfying $x_1^2, x_2^2 \notin \{f_1(x_1^1),f_1(x_2^1)\}$ and $f_2(f_1(x_1^1))\neq f_2(f(x_1^1)).$  
\end{proof}
 
\section{An example of a non-locally contractible monodromy space}\label{pii}

In this section we introduce an example of a branched cover $S^3 \to S^3$ for which the related monodromy space is not a locally contractible space. The construction of the example is inspired by Heinones and Rickmans construction in \cite{Heinonen-Rickman} of a branched covering $S^3 \to S^3$ containing a wild Cantor set in the branch set. 
We need the following result originally due to Berstein and Edmonds \cite{ONT} in the extent we use it.

\begin{Thm}[\cite{PKJ}, Theorem 3.1]\label{PKJ}Let $W$ be a connected, compact, oriented piecewise linear 3-manifold whose boundary consists of $p \geq 2$ components $M_0, \ldots, M_{p-1}$ with the induced orientation. Let $W'=N \setminus int B_j$ be an oriented piecewise linear $3$-sphere $N$ in $\R^4$ with $p$ disjoint, closed, polyhedral $3$-balls removed, and have the induced orientation on the boundary. Suppose that $n \geq 3$ and $\varphi_j \colon M_j \ra \partial B_j$ is a sense preserving piecewise linear branched cover of degree $n,$ for each $j=0,1, \ldots, p-1.$ Then there exists a sense preserving piecewise linear branched cover $\varphi \colon W \ra W'$ of degree $n$ that extends $\varphi_j:$s.
\end{Thm}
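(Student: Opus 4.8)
The plan is to build $\varphi$ from monodromy data together with a Fox completion, and then to control the resulting total space by an inductive handle construction. First I would fix compatible piecewise linear triangulations of $W$, $W'$ and the boundary spheres making each $\varphi_j$ simplicial, and record the (finite) branch set $\Sigma_j \subset \partial B_j$ of $\varphi_j$ together with its monodromy $\omega_j \colon \pi_1(\partial B_j \setminus \Sigma_j) \ra \Sym_n$. The goal then becomes: produce a properly embedded $1$-complex $\Sigma \subset W'$ with $\Sigma \cap \partial W' = \bigcup_j \Sigma_j$ and a homomorphism $\omega \colon \pi_1(W' \setminus \Sigma) \ra \Sym_n$ restricting to each $\omega_j$, whose associated covering, after Fox completion over $\Sigma$, is a branched cover with total space $W$ extending the $\varphi_j$. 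Here it is convenient that $W'$ is simply connected: writing $N = S^3$ and $N = W' \cup \bigcup_j B_j$, van Kampen gives $\pi_1(W') = 1$, so the monodromy is governed entirely by the meridians of $\Sigma$ and by the prescribed boundary data.

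Next I would separate a collar $\partial B_j \times [0,1] \subset W'$ of each boundary sphere and extend the branched cover across it as the product $\varphi_j \times \id$, which carries $M_j \times [0,1]$ to $\partial B_j \times [0,1]$. This reduces the problem to filling the central region $W'_0 \subset W'$ (again a $3$-sphere with $p$ balls removed) with prescribed degree-$n$ branched covers $\varphi_j$ over its boundary spheres. Capping the removed balls back in, this is equivalent to realizing a closed model of $W$ as a degree-$n$ branched cover of $S^3$ that extends the $\varphi_j$ on the capping spheres --- a relative form of the Hilden--Montesinos / Berstein--Edmonds realization theorem.

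For the central construction I would use a handle decomposition of $W$ relative to $\partial W$ and build the branched cover inductively over the handles, at each stage choosing arcs of $\Sigma$ and meridional transpositions in $\Sym_n$ compatible with the attaching maps. The hypothesis $n \geq 3$ is what makes this possible: it supplies enough independent transpositions to keep the cover connected, to realize the monodromy forced by each $1$- and $2$-handle, and --- crucially --- to adjust the branch set by band and stabilization moves so that the Fox completion is homeomorphic to $W$ itself rather than merely to some $3$-manifold with the correct boundary. I expect this last point to be the main obstacle: controlling the precise homeomorphism type of the completed total space, and not just its boundary, is where the genuinely three-dimensional combinatorics enter, and it is exactly here that the degree bound and the sense-preserving orientation bookkeeping (via the induced orientations from the embedding $N \subset \R^4$) must be used to close up the construction consistently.
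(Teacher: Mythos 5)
Before comparing: the paper itself does not prove this statement. It is imported wholesale as Theorem 3.1 of \cite{PKJ}, originally due to Berstein and Edmonds \cite{ONT}, and is used as a black box in the construction of Section 5. So your proposal can only be judged against the argument in the literature --- which is indeed a handle-by-handle induction broadly of the kind you describe.

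Judged that way, there are two genuine gaps. First, the reduction in your middle paragraph is circular: after isolating the central region you propose to ``cap the removed balls back in'' and appeal to ``a relative form of the Hilden--Montesinos / Berstein--Edmonds realization theorem,'' but that relative realization statement \emph{is} the theorem to be proved. Moreover, the capping move is not even available on the domain side: the boundary components $M_j$ of $W$ are in general not spheres, so they cannot be filled with balls, and filling them with compact $3$-manifolds admitting branched covers over $B_j$ that restrict to the prescribed $\varphi_j$ on $M_j$ is again exactly the extension problem you started with. Second, the step you yourself flag as ``the main obstacle'' --- arranging that the total space of the covering you build is PL homeomorphic to $W$ itself \emph{and} that the covering restricts to every prescribed $\varphi_j$ simultaneously --- is where essentially all of the content of the theorem lies, and the proposal offers no argument for it beyond the expectation that $n \geq 3$ supplies enough transpositions. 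In the Berstein--Edmonds proof this is carried by concrete lemmas: one extending a given boundary covering across $1$- and $2$-handles after stabilizing the branch set, and one connecting any two sense-preserving degree-$n$ ($n \geq 3$) branched coverings of a closed orientable surface over $S^2$ through a branched covering of the product $S^2 \times [0,1]$, which is what lets the induction close up against the prescribed data at the far boundary components. A plan that correctly names the hard step but defers it, and elsewhere quotes the theorem being proved, is not yet a proof.
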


\begin{figure}[htb]
\includegraphics{RigidityPoster1.5}
\caption{}\label{rer}
\end{figure} 

Let $x \in S^3$ be a point in the domain and $y \in S^3$ a point in the target. Let $X \subset S^3$ be a closed piecewise linear ball with center $x$ and let $Y \subset S^3$ be a closed piecewise linear ball with center $y.$ Let $T_0 \subset \mathrm{int} X$ be a solid piecewise linear torus so that $x \in \mathrm{int} T_0.$ Now let $\mathcal{T}=(T_{n})_{n \in \N}$ be a sequence of solid piecewise linear tori in $T_0$ so that $T_{k+1} \subset \mathrm{int} T_{k}$ for all $k \in \N$ and $\Cap_{n=1}^{\infty}T_{n}=\{x\}.$ Let further $B_0 \subset \mathrm{int} Y$ be a closed piecewise linear ball with center $y$ and let $\mathcal{B}=(B_{n})_{n \in \N}$ be a sequence of closed piecewise linear balls with center $y$ so that $B_{k+1} \subset \mathrm{int} B_{k}$ for all $k \in \N$ and $\Cap_{n=1}^{\infty}B_{n}=\{y\}.$ See illustration in Figure \ref{rer}.

We denote $\partial X=\partial T_{-1}$ and $\partial Y=\partial B_{-1}$ and choose an orientation to all boundary surfaces from an outward normal. Let $f_n \colon \partial T_n \ra \partial B_n, n \in \{-1\} \cup \N,$ be a collection of sense preserving piecewise linear branched coverings so that 
\begin{itemize}
\item[(i)]the degree of all the maps in the collection are the same and greater than $2,$
\item[(ii)]$f_{-1}$ has an extension to a branched covering $g \colon S^3 \setminus \mathrm{int}X \to S^3 \setminus \mathrm{int}Y,$ 
\item[(iii)]the maps $f_n$ are for all even $n \in \N$ normal branched covers with no points of local degree three and
\item[(iv)]the branched covers $f_n$ have for all uneven $n \in \N$ a point of local degree three.
\end{itemize}
We note that for an example of such a collection of maps of degree $18,$ we may let $f_{-1}$ be a $18$-to-$1$ winding map, $f_n$ be for even $n \in \N$ as illustrated in Figure \ref{wer} and $f_n$ be for all uneven $n \in \N$ as illustrated in Figure \ref{ter}. 

\begin{figure}[htb]
\includegraphics{RigidityPoster1.3}
\caption{}\label{wer}
\end{figure} 

\begin{figure}[hbt]
\includegraphics{RigidityPoster1.2}
\caption{}\label{ter}
\end{figure}

Let then $n \in \N$ and let $F_n \subset X$ be the compact piecewise linear manifold with boundary $\partial T_{n-1} \cup \partial T_{n}$ that is the closure of a component of $X \setminus (\Cup_{k=-1}^\infty \partial T_k).$ Let further, $G_n \subset Y$ be the compact piecewise linear manifold with boundary $\partial B_{n-1} \cup \partial B_{n}$ that is the closure of a component of $Y \setminus (\Cup_{k=-1}^\infty \partial B_k).$ Then $F_n \subset X$ is a compact piecewise linear manifold with two boundary components and $G_n \subset Y$ is the complement of the interior of two distinct piecewise linear balls in $S^3.$ Further, $f_{n-1} \colon \partial T_{n-1} \ra \partial B_{n-1}$ and $f_{n} \colon \partial T_{n} \ra \partial B_{n}$ are sense preserving piecewise linear branched covers between the boundary components of $F_n$ and $G_n.$ Since the degree of $f_n$ is the same as the degree of $f_{n-1}$ and the degree is greater than $2,$ there exists by \ref{PKJ} a piecewise linear branched cover $g_n \colon F_n \to G_n$ so that $g_n \r \partial T_{n-1}=f_{n-1}$ and $g_n \r \partial T_{n}=f_{n}.$ 

Now $X=\Cup_{k=0}^\infty F_n$ and $Y=\Cup_{k=0}^\infty G_n$ and $g \colon S^3 \setminus \mathrm{int}X \to S^3 \setminus \mathrm{int}Y$ satisfies $g \r \partial X=g_{0} \r \partial X.$ Hence we may define a branched covering $f \colon S^3 \ra S^3$ by setting $f(x)=g_n(x)$ for $x \in G_n, n \in \N,$ and $f(x)=g(x)$ otherwise. 

However, we want the map $f \colon S^3 \to S^3$ to satisfy one more technical condition, namely the existence of collections of properly disjoint open sets $(M_k)_{k \in \N}$ of $X$ and $(N_k)_{k \in \N}$ of $Y$ so that $M_k \subset X$ is a piecewise linear regular neighbourhood of $\partial T_k$ and $N_k \subset Y$ is a piecewise linear regular neighbourhood of $B_k$ and $M_k=f^{-1}N_k,$ and $f \r M_k \colon M_k \to N_k$ has a product structure of $f_k$ and the identity map for all $k \in \N.$ We may require this to hold for the $f \colon S^3 \to S^3$ defined, since in other case we may by cutting $S^3$ along the boundary surfaces of $\partial T_k$ and $\partial B_k$ and adding regular neighbourhoods $M_k$ of $\partial T_k$ and $N_k$ of $ \partial B_k$ in between for all $k \in \N$ arrange this to hold without loss of conditions (i)--(iv), see \cite{RourkeSanderson}. 

In the last section of this paper we prove the following theorem. 

\begin{Thm}\label{11} Let $f \colon S^3 \ra S^3$ and $y \in S^3$ be as above and $\ol{f} \colon X_f \to Y$ the normalization of $f.$ Then $H_1(W; \Z) \neq 0$ for all open sets $W \subset X_f$ satisfying $$\overline{f}^{-1}\{y\} \cap W \neq \es.$$ 
\end{Thm}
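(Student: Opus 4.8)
The plan is to prove the statement by feeding a connected-component count into the reduced Mayer--Vietoris sequence recalled in the Preliminaries. First I would reduce to a convenient form. Decomposing $W=\bigsqcup_j W_j$ into connected components gives $H_1(W;\Z)=\bigoplus_j H_1(W_j;\Z)$, so it suffices to treat a connected $W$ containing a chosen point $w\in\ol f^{-1}\{y\}$. By Lemma \ref{fun} and the normal-neighbourhood discussion, the components $D_k$ of $\ol f^{-1}(\interior B_k)$ containing $w$ form a neighbourhood basis of $w$, so for large $k$ we have $D_k\subset W$; I will choose a Mayer--Vietoris splitting adapted to such a level.

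Second, I would pin down the local model of $\ol f$ over the collars. Over the regular neighbourhood $N_k$ of $\partial B_k$ the map $f$ is the product $f_k\times\id$, and since $N_k\setminus f(B_f)$ is homotopy equivalent to $\partial B_k\setminus f_k(B_{f_k})$, the monodromy over $N_k$ agrees with that of $f_k$; hence each component of $\ol f^{-1}(N_k)$ is a copy of $X_{f_k}\times(-1,1)$, where $X_{f_k}$ is the monodromy space of $f_k\colon\partial T_k\to\partial B_k$, a surface by Lemma \ref{r}. For even $k$, condition (iii) makes $f_k$ normal, so $X_{f_k}=\partial T_k$ is a torus and the local monodromy subgroup $L_k\leq\Sym$ acts simply transitively on the fibre. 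For odd $k$, the local degree-$3$ point of condition (iv) forces $L_k$ to contain a permutation with a $3$-cycle over a value whose local degrees are not all equal, so $f_k$ is non-normal and $X_{f_k}$ has higher genus. Thus along the nested collars the cover alternates between simply-transitive torus layers and genuinely larger non-normal layers, and the two kinds of local monodromy subgroups $L_k$ are of different type.

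The heart of the argument is then the reduced Mayer--Vietoris sequence
$$H_1(W;\Z)\to\widetilde H_0(U\cap V;\Z)\to\widetilde H_0(U;\Z)\oplus\widetilde H_0(V;\Z),$$
applied to $U=\ol f^{-1}(\interior B_{k-1})\cap W$ and $V=\ol f^{-1}(S^3\setminus B_k)\cap W$, whose union is $W$ and whose intersection is the preimage in $W$ of the open shell $\interior B_{k-1}\setminus B_k$. The image of the connecting map equals the kernel of the second map, so $H_1(W;\Z)\neq 0$ as soon as that map on $\widetilde H_0$ fails to be injective; writing $b_0(\cdot)$ for the number of connected components, this occurs whenever $b_0(U\cap V)\geq b_0(U)+b_0(V)$. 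I would compute these numbers from the global monodromy group $\cG=\Deck(\ol f)$: for a connected shell or ball the number of components of its preimage is the index in $\cG$ of the subgroup generated by the corresponding local monodromy. Since the shell retracts onto $\partial B_k$, its preimage has $[\cG:L_k]$ components; choosing the splitting level $k$ even keeps $L_k$ as small as the degree allows, so this index is large, whereas the inner ball and the outer region, which see the forced non-normal odd layers, contribute comparatively few components. This yields the inequality and hence $H_1(W;\Z)\neq 0$.

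The main obstacle I anticipate is precisely this uniform component count. One must establish $b_0(U\cap V)\geq b_0(U)+b_0(V)$ after intersecting with an \emph{arbitrary} $W$, not merely for the small basic neighbourhoods $D_k$, where the telescope picture makes it transparent, but also for large $W$, where components of the shell preimage could a priori be reconnected through $W$. Controlling this requires understanding how the local monodromy subgroups $L_k$ sit inside $\cG$ and generate it across the levels, so that the alternation between normal (even) and non-normal (odd) layers produced by conditions (iii)--(iv) cannot be undone by paths in $W$; equivalently one needs a scale-independent label --- most naturally a linking datum or a $\Deck$-valued invariant attached to the degree-$3$ layers --- that distinguishes the components of $U\cap V$ regardless of the size of $W$. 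Establishing that invariant and verifying its compatibility with the product collars $M_k\to N_k$ is where the real work lies.
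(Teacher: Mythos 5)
Your proposal correctly identifies the homological mechanism (the reduced Mayer--Vietoris sequence on $\widetilde H_0$, which is the paper's Lemma \ref{2}) and correctly senses that the engine is the alternation between the normal layers (iii) and the degree-three layers (iv). But the proof has a genuine gap, and you name it yourself: the component-count inequality $b_0(U\cap V)\geq b_0(U)+b_0(V)$ for an \emph{arbitrary} open $W$ is never established, and that inequality is the entire content of the theorem. Two concrete problems make your quantitative route worse than a deferral. First, your count of the shell preimage is wrong: the open shell $\interior B_{k-1}\setminus B_k$ is essentially the region $G_k$ filled in by the Berstein--Edmonds extension $g_k$ of Theorem \ref{PKJ}, whose branching is completely uncontrolled; the punctured shell does \emph{not} deformation retract onto the punctured $\partial B_k$, so the image of its fundamental group in $\cG$ is in general much larger than the collar monodromy $L_k$, and ``$[\cG:L_k]$ components'' is unjustified. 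The product structure $f\r M_k=f_k\times\id$ holds only over the collars $N_k$, not over the shells. Second, intersecting with $W$ can split components of $\bar f^{-1}(\interior B_{k-1})$ and of $\bar f^{-1}(S^3\setminus B_k)$ into many pieces, which \emph{increases} $b_0(U)$ and $b_0(V)$ and works against your inequality; no scale-independent invariant of the kind you postulate is supplied.

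The paper avoids counting altogether by a qualitative argument that you are missing. It localizes: choose a destructive neighbourhood $V_0$ of $y$ (Theorem \ref{45}) so small that the $x$-component $W_0$ of $\bar f^{-1}(V_0)$ lies in $W$ and is a normal neighbourhood, and take the domain cover $\{A,B\}$ of $V_0$ with $A\cap B=N_{2k}$, so that the relevant intersection preimage is $M_{2k}$, where (iii) gives normality. The key step is then a contradiction via deck-group restriction isomorphisms (the map \eqref{tf}, Lemma \ref{1}, Lemma \ref{kuu}): if $W_0^A\cap W_0^B=(\bar f\r W_0)^{-1}(A\cap B)$ were connected, normality of $f\r M_{2k}$ would transfer through the restriction isomorphisms to make $f\r f^{-1}(V_0)$ a normal branched covering, contradicting the non-normality forced by the clash of (iii) and (iv). Hence $W_0^A\cap W_0^B$ is disconnected --- no component count needed. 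Finally, the arbitrariness of $W$ is handled by engineering the cover of $W$, not by a global invariant: since $\overline B\subset V_0$, one can enlarge $W_0^A$ by a set $W'\subset W$ covering $W\setminus W_0$ and disjoint from $W_0^B$, so that $\{W_0^B,\,W_0^A\cup W'\}$ covers $W$ with intersection exactly the disconnected set $W_0^A\cap W_0^B$, and Lemma \ref{2} applies. Your decomposition, by contrast, spreads $U\cap V$ across all of $W$, which is precisely why you cannot control it; if you want to complete your argument, the fix is to localize the intersection of your Mayer--Vietoris pieces inside one small normal neighbourhood as the paper does, at which point the counting problem disappears.
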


Theorem \ref{eka} in the introduction then follows from Theorem \ref{11} by the following easy corollary.

\begin{Cor}\label{pimia} Let $f \colon S^3 \ra S^3$ and $y \in S^3$ be as above. Then the monodromy space $X_f$ of $f$ is not locally contractible.  
\end{Cor}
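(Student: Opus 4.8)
The plan is to deduce Corollary \ref{pimia} directly from Theorem \ref{11} by contradiction, exploiting the distinction between local contractibility and the nonvanishing of first homology on a neighbourhood basis. First I would recall what local contractibility at a point $z \in X_f$ means: every neighbourhood $U$ of $z$ contains a neighbourhood $W$ of $z$ that is contractible inside $U$; in particular, I can arrange the inclusion $W \hookrightarrow U$ to be null-homotopic. I would fix a point $z \in \overline{f}^{-1}\{y\}$, which is nonempty since $\overline{f}$ is surjective (being a proper branched covering and hence an orbit map onto $S^3$).

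The key step is to observe that if $X_f$ were locally contractible at such a point $z$, then $z$ would possess a neighbourhood basis consisting of \emph{contractible} open sets. Indeed, local contractibility combined with the fact that $X_f$ is locally connected and locally compact lets me select, inside any prescribed neighbourhood, an open set $W \ni z$ that is itself contractible (not merely contractible within a larger set): I would take a nested sequence $U \supset W_1 \supset W_2 \supset \cdots$ where each $W_{i+1}$ contracts inside $W_i$, and then the union/colimit argument shows one may choose a genuinely contractible member, or more simply, I invoke that a locally contractible space admits arbitrarily small neighbourhoods that are contractible in the standard sense used here. For any such contractible $W$ we have $H_1(W;\Z)=0$. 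But $W$ is an open set with $\overline{f}^{-1}\{y\} \cap W \ni z \neq \varnothing$, so Theorem \ref{11} forces $H_1(W;\Z) \neq 0$, a contradiction. Hence $X_f$ fails to be locally contractible at $z$, and therefore $X_f$ is not a locally contractible space.

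The main obstacle I anticipate is pinning down the precise notion of local contractibility so that it genuinely yields an open neighbourhood $W$ with $H_1(W;\Z)=0$, rather than only a neighbourhood that is contractible \emph{relative to} a larger one. The relative version only gives that the map $H_1(W;\Z) \to H_1(U;\Z)$ is zero, which is not immediately incompatible with $H_1(W;\Z)\neq 0$. The cleanest resolution is to use the standard definition under which local contractibility at $z$ means $z$ has a neighbourhood basis of open sets each of which is contractible as a space; then $H_1(W;\Z)=0$ is automatic and the contradiction with Theorem \ref{11} is immediate. Since Theorem \ref{11} is stated for \emph{all} open $W$ meeting $\overline{f}^{-1}\{y\}$, no matter how large or small, even the weaker relative statement can be pushed through by applying the theorem to the contractible member itself. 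This makes the corollary follow formally, with the homological input entirely supplied by Theorem \ref{11}.
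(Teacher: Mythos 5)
Your proposal is correct and is essentially the paper's own proof: the paper takes an arbitrary open neighbourhood $W$ of a point $x \in \bar f^{-1}\{y\}$, applies Theorem \ref{11} to get $H_1(W;\Z) \neq 0$ (passing through Hurewicz to nontrivial $\pi_1$, though as you note the homological statement alone already rules out contractibility), and concludes that no neighbourhood of $x$ is contractible, so $X_f$ is not locally contractible. The definitional worry you raise is settled exactly as you suggest: the paper works with the notion that local contractibility requires arbitrarily small neighbourhoods that are contractible as spaces, and since Theorem \ref{11} applies to \emph{every} open set meeting the fiber, the contradiction is immediate (your nested-sequence/colimit digression is unnecessary and can be dropped).
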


\begin{proof} Let $x \in \bar f ^{-1} \{y\}$ and $W$ a neighbourhood of $x.$ Then $H_1(W; \Z)\neq 0$ and $W$ has non-trivial fundamental group by Hurewich Theorem, see \cite{Massey-book}. Thus $W$ is not contractible. Thus the monodromy space $X_f$ of $f$ is not a locally contractible space. 
\end{proof}

\section{Destructive points}\label{dp}

In this section we define destructive points and prove Theorem \ref{11}. 

Let $X$ be a locally connected Hausdorff space. We call an open and connected subset $V \subset X$ a \textit{domain}. Let $V \subset X$ be a domain. A pair $\{A,B\}$ is called a \textit{domain covering} of $V,$ if $A,B \subset X$ are domains and $V=A \cup B.$ We say that a domain covering $\{A,B\}$ of $V$ is \textit{strong}, if $A \cap B$ is connected. Let $x \in V$ and let $U \subset V$ be a neighbourhood of $x.$ Then we say that $\{A,B\}$ is $U$-\textit{small} at $x$, if $x \in A \subset U$ or $x \in B \subset U.$

Let then $f \colon X \to Y$ be a branched covering between manifolds, $y \in Y$ and $V_0 \subset Y$ a domain containing $y.$ Then $V_0$ is a \textit{destructive neighbourhood} of $y$ with respect to $f,$ if $f \r f^{-1}(V_0)$ is not a normal covering to its image, but there exists for every neighbourhood $U \subset V_0$ of $y$ a $U$-small strong domain covering $\{A,B\}$ of $V_0$ at $y$ so that 
$\{f^{-1}(A),f^{-1}(B)\}$ is a strong domain cover of $f^{-1}(V_0)$ and $f \r (f^{-1}(A) \cap f^{-1}(B))$ is a normal covering to its image.

We say that $y$ is a \textit{destructive} point of $f$, if $y$ has a neighbourhood basis consisting of neighbourhoods that are destructive with respect to $f.$ 

\begin{Thm}\label{45}The map $S^3 \to S^3$ of the example in section \ref{pii} has a destructive point.
\end{Thm}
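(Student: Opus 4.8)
The plan is to show that the center point $y$ of the nested balls $\mathcal B=(B_n)$ is a destructive point, using the interiors $\mathrm{int}\,B_n$ as a neighbourhood basis of destructive neighbourhoods. Throughout I would use the two identifications built into the construction: $f^{-1}(\mathrm{int}\,B_k)=\mathrm{int}\,T_k$ for every $k$, and $f^{-1}(N_k)=M_k$ with the product structure $f \r M_k=f_k\times\id$, where $N_k$ and $M_k$ are the prearranged regular-neighbourhood collars of $\partial B_k$ and $\partial T_k$. Since $\Cap_n B_n=\{y\}$, the sets $\mathrm{int}\,B_n$ form a neighbourhood basis at $y$, so it suffices to verify the two clauses of the definition for $V_0=\mathrm{int}\,B_n$, whose full preimage is the connected open solid torus $f^{-1}(V_0)=\mathrm{int}\,T_n$ mapping onto $\mathrm{int}\,B_n$.

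For the non-normality clause I would argue by contradiction. Suppose $f \r \mathrm{int}\,T_n\colon\mathrm{int}\,T_n\to\mathrm{int}\,B_n$ were a normal covering to its image, and choose an \emph{odd} index $k>n$. The collar $\mathrm{int}\,N_k$ is a connected open subset of $\mathrm{int}\,B_n$ whose preimage $\mathrm{int}\,M_k$ is connected, so by the restriction principle recorded in the discussion of \eqref{tf} the map $f \r \mathrm{int}\,M_k\to\mathrm{int}\,N_k$ would again be normal. But $f \r M_k=f_k\times\id$, and a product $f_k\times\id$ is normal exactly when $f_k$ is, since its deck group is $\Deck(f_k)\times\{\id\}$ and its associated covering is the product of that of $f_k$ with a contractible interval. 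For odd $k$ the surface map $f_k$ is non-normal by construction: the monodromy action is not free, because a point of local degree three occurs while the remaining sheets are unbranched there, so a meridian fixes some sheets while cyclically permuting three others. This contradiction shows $f \r \mathrm{int}\,T_n$ is not normal, for every $n$.

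For the domain-covering clause, fix a neighbourhood $U\subset\mathrm{int}\,B_n$ of $y$ and choose an \emph{even} index $k>n$ large enough that $N_k\subset U$; this is possible because the collars $N_k$ shrink to $y$ and even indices are cofinal. Writing $N_k$ as a collar separating $\mathrm{int}\,B_n$ along $\partial B_k$, set $A=\mathrm{int}\,B_k\cup N_k$ and $B=(\mathrm{int}\,B_n\setminus B_k)\cup N_k$. Then $A,B$ are domains with $A\cup B=\mathrm{int}\,B_n$ and $A\cap B=N_k$ connected, so $\{A,B\}$ is a strong domain covering of $V_0$; it is $U$-small at $y$ since $y\in A\subset U$. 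Taking preimages yields the connected domains $f^{-1}(A)=\mathrm{int}\,T_k\cup M_k$ and $f^{-1}(B)=(\mathrm{int}\,T_n\setminus T_k)\cup M_k$, with $f^{-1}(A)\cup f^{-1}(B)=\mathrm{int}\,T_n=f^{-1}(V_0)$ and $f^{-1}(A)\cap f^{-1}(B)=M_k$ connected, so $\{f^{-1}(A),f^{-1}(B)\}$ is a strong domain covering of $f^{-1}(V_0)$. Finally $f \r (f^{-1}(A)\cap f^{-1}(B))=f \r M_k=f_k\times\id$ is a normal covering to its image because $k$ is even and $f_k$ is normal. Both clauses hold, so every $\mathrm{int}\,B_n$ is destructive and $y$ is a destructive point.

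The delicate point, and the only place real thought is needed, is the parity bookkeeping that makes the two clauses compatible: the overlap piece demanded by clause two must be \emph{normal}, which is why I route it through an even-level product collar $M_k$, whereas the failure of global normality in clause one is detected at the \emph{odd} levels where $f_k$ is non-normal. Since both families of levels are cofinal, the single neighbourhood basis $\{\mathrm{int}\,B_n\}$ simultaneously carries both features. The one external input is that the odd-level maps $f_k$ are genuinely non-normal (Figure \ref{ter}); granting this, everything reduces to the restriction principle \eqref{tf} and the product structure $f \r M_k=f_k\times\id$.
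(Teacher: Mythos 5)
Your overall strategy coincides with the paper's: the same basis $\mathrm{int}\,B_n$ of candidate destructive neighbourhoods, the same identification $f^{-1}(\mathrm{int}\,B_n)=\mathrm{int}\,T_n$, and, for the second clause of the definition, the same $U$-small strong domain covering whose overlap is an even-level collar $N_k$ with connected preimage $M_k$, on which $f\r M_k=f_k\times\id$ is normal by (iii). (Minor slip there: you need $B_k\cup N_k\subset U$, not merely $N_k\subset U$, for $U$-smallness of $A=\mathrm{int}\,B_k\cup N_k$; this is harmless since $B_k\cup N_k$ shrinks to $\{y\}$.)

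The gap is in the non-normality clause. Your contradiction rests on the claim that for odd $k$ the map $f_k$ is non-normal ``by construction,'' which you justify by a reading of Figure \ref{ter} and explicitly flag as an external input. But non-normality of the odd-level maps is not among the conditions (i)--(iv) defining the example, and it does not follow from them: condition (iv) only guarantees a point of local degree three, and a sense-preserving degree-$18$ branched cover from a torus to a sphere with a point of local degree three can perfectly well be normal --- take the orbit map of the order-$18$ group $\Z_3\rtimes\Z_6\cong S_3\times\Z_3$ acting on the torus with quotient the $(2,3,6)$-orbifold. So, as written, your clause-one argument could fail for a collection of maps satisfying everything the construction stipulates. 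The paper's proof is arranged precisely to avoid this dependence: it compares an \emph{odd} level against an \emph{even} level. If $g=f\r f^{-1}(V_0)$ were normal, then by the restriction isomorphism \eqref{tf} both $g\r M_1$ and $g\r M_2$ would be normal with deck groups isomorphic to $\Deck(g)$, hence to each other; but (iii) and (iv) make this impossible, since a normal cover of degree $18$ from a torus to a sphere having a point of local degree three must be of quotient type $(2,3,6)$ (deck group $S_3\times\Z_3$, centre $\Z_3$), while one with no such point must be of type $(2,2,2,2)$ (a generalized dihedral group of order $18$, trivial centre). That two-level comparison needs nothing about the figures beyond (iii) and (iv) and is robust to whether the odd-level maps happen to be normal; your one-level argument is not. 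To repair your proof, either import the paper's deck-group comparison, or prove (rather than assume) that the fiber of $f_k$ through the degree-three point contains points of a different local degree.
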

\begin{proof} We show that $y \in \Cap_{n=1}^{\infty}B_{n}$ is a destructive point of $f$. We first show that $V_0=\mathrm{int}B_0$ is a destructive neighbourhood of $y.$ 

We begin this by showing that $g:=f \r f^{-1}(V_0) \colon f^{-1}(V_0) \to V_0$ is not a normal branched cover. Towards contradiction suppose that $g$ is a normal branched cover. Then $\mathrm{Deck}(g) \cong \mathrm{Deck}(g \r M_1)$ and $\mathrm{Deck}(g) \cong \mathrm{Deck}(g \r M_2),$ since $M_1=f^{-1}(N_1)$ and $M_2=f^{-1}(N_2)$ are connected. On the other hand (iii) and (iv) imply that $\mathrm{Deck}(g \r M_1) \ncong \mathrm{Deck}(g \r M_2)$ and we have a contradiction. 

Let then $V_1 \subset V_0$ be any open connected neighbourhood of $y.$ Then there exists such $k \in \N,$ that $B_{2k} \cup N_{2k} \subset V_1.$ Let $B:=B_{2k} \cup N_{2k}$ and $A=(V_0 \setminus B_{2k}) \cup N_{2k}.$ Then $\{A,B\}$ is a strong $V_1$-small domain cover of $V_0$ at $y$ and $A \cap B=N_{2k}.$ In particular, $\{f^{-1}(A),f^{-1}(B)\}$ is a strong domain cover of $f^{-1}(V_0).$ Further,
$$f^{-1}(A) \cap f^{-1}(B)=f^{-1}(A \cap B)=f^{-1}(N_{2k})=M_{2k}$$ and $f \r (f^{-1}(A) \cap f^{-1}(B))=f \r M_{2k} \colon M_{2k} \to N_{2k}$ is a normal branched covering by (iii). 
Thus $V_0$ is a destructive neighbourhood of $y.$ The same argument shows that $V_k:=\mathrm{int}B_k$ is a destructive neighbourhood of $y$ for all $k \in \N.$ Thus $y$ has a neighbourhood basis consisting of neighbourhoods that are destructive with respect to $f.$
\end{proof}

Theorem \ref{45} implies that Theorems \ref{11} and \ref{eka} follow from the following result.

\begin{Thm}\label{15}Let $f \colon X \to Y$ be a proper branched covering between manifolds and let  
\begin{equation*}
\xymatrix{
& X_f \ar[dl]_p \ar[dr]^{\overline{f}}& \\
X \ar[rr]^f & & Y }
\end{equation*}
be a commutative diagram of branched coverings so that $X_f$ is a connected, locally connected Hausdorff space and $p \colon X_f \ra X$ and $q \colon X_f \ra Y$ are proper normal branched coverings. Suppose there exists a destructive point $y \in Y$ of $f$. Then $H_1(W; \Z) \neq 0$ for all open sets $W \subset X_f$ satisfying $$\overline{f}^{-1}\{y\} \cap W \neq \es.$$  
\end{Thm}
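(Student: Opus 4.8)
The plan is to prove the contrapositive in spirit: I assume $W \subset X_f$ is open with $\overline{f}^{-1}\{y\} \cap W \neq \varnothing$ and aim to produce a nontrivial class in $H_1(W;\Z)$. Fix a point $w_0 \in \overline{f}^{-1}\{y\} \cap W$. Since $\overline{f}$ is a proper branched covering and $W$ is a neighbourhood of $w_0$, I first want to replace $W$ by a smaller normal neighbourhood $D$ of $w_0$ inside $W$ whose image $V_0 = \overline{f}(D)$ is a destructive neighbourhood of $y$; this is possible because $y$ has a neighbourhood basis of destructive neighbourhoods together with the standard normal-neighbourhood construction available for proper branched coverings (the discussion around Lemma \ref{fun}). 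By naturality of $H_1$ under inclusion $D \hookrightarrow W$ it suffices to find a nontrivial class that survives in $H_1(W;\Z)$; concretely I will produce a loop in $D$ that is non-nullhomotopic in $W$, so the cleanest route is to show the inclusion-induced map on $H_1$ (equivalently on $\pi_1$ via Hurewicz) is nontrivial, by exhibiting an obstruction preserved under the enlargement.

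Next I unpack the destructivity. Given the destructive neighbourhood $V_0$ of $y$, for every neighbourhood $U \subset V_0$ of $y$ there is a $U$-small strong domain covering $\{A,B\}$ of $V_0$ at $y$ whose preimages $\{\overline{f}^{-1}(A), \overline{f}^{-1}(B)\}$ form a strong domain covering of $\overline{f}^{-1}(V_0)$ with $\overline{f}$ restricted to $\overline{f}^{-1}(A \cap B)$ a normal covering onto its image, while $\overline{f} \r \overline{f}^{-1}(V_0)$ itself is \emph{not} normal. The plan is to pull this data back through $p$ and $\overline f$ to the monodromy-type space $X_f$: set $\widehat A = \overline f^{-1}(A)$, $\widehat B = \overline f^{-1}(B)$, and $\widehat V = \overline f^{-1}(V_0)$. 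By the strong-covering hypothesis $\{\widehat A, \widehat B\}$ is a strong domain covering of $\widehat V$, so $\widehat A \cap \widehat B = \overline f^{-1}(A \cap B)$ is connected, and likewise (shrinking if necessary) the components meeting $w_0$ are connected. This is exactly the configuration to which the reduced Mayer--Vietoris sequence of the preliminaries applies.

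The heart of the argument is then a Mayer--Vietoris computation combined with a normality obstruction. Writing the terminating piece of the reduced sequence for the cover $\{\widehat A_0, \widehat B_0\}$ of the relevant component $\widehat V_0 \subset X_f$ containing $w_0$,
\begin{equation*}
H_1(\widehat V_0;\Z) \to \widetilde H_0(\widehat A_0 \cap \widehat B_0;\Z) \to \widetilde H_0(\widehat A_0;\Z) \oplus \widetilde H_0(\widehat B_0;\Z) \to \widetilde H_0(\widehat V_0;\Z),
\end{equation*}
I want to show the first arrow is forced to be nonzero. The mechanism is this: if $H_1$ vanished on every small enough such $\widehat V_0$, then the intersection piece $\widehat A_0 \cap \widehat B_0 = p$-lift of a region over which $\overline f$ is normal would be forced to split in a way making $\overline f \r \widehat V_0$ into an orbit map of a group action, i.e. normal; but this contradicts the non-normality clause of destructivity, which by the isomorphism \eqref{tf} and Lemma \ref{1} transfers between $\overline f \r \widehat V$ and its restriction to $M_{2k}$-type pieces. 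So I intend to argue that the connectedness of $\widehat A_0 \cap \widehat B_0$ (strongness) combined with non-normality of $\overline f$ over $V_0$ forces the connecting-map image, hence $\widetilde H_0(\widehat A_0 \cap \widehat B_0;\Z)$ to receive a nontrivial class from $H_1(\widehat V_0;\Z)$ — the failure of the deck group to act transitively on the relevant preimage components is precisely the generator.

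The main obstacle I anticipate is bridging \emph{local} destructivity of $f$ downstairs to a \emph{global} homological statement about $X_f$ upstairs, and in particular verifying that the non-normality of $\overline f \r \widehat V$ genuinely obstructs the vanishing of $H_1$ rather than merely of $\pi_0$ of intersections. Making this precise requires a careful deck-group bookkeeping: using that $p \colon X_f \to X$ and $q = \overline f \colon X_f \to Y$ are proper \emph{normal} branched coverings, Lemma \ref{1} identifies $\Deck(p) \subset \Deck(q)$ with the subgroup governing normality of $f$, and I must show that the $U$-small coverings can be chosen compatibly with this subgroup structure so that the Mayer--Vietoris connecting homomorphism detects the non-normal ``extra'' deck symmetry. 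The subtle point is that a single loop might die under $\widehat V_0 \hookrightarrow W$; to prevent this I will exploit that the destructive neighbourhoods form a basis, so the loop can be taken in an arbitrarily small $\widehat V_0$ and its nontriviality persists because the enlargement $\widehat V_0 \hookrightarrow W$ still factors through intermediate strong covers carrying the same $\pi_0$-obstruction — completing the chain back to $H_1(W;\Z) \neq 0$.
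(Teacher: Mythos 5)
There is a genuine gap in your proposal, preceded by a misreading of the definition that matters. The misreading: destructivity of $V_0$ says that $\{f^{-1}(A), f^{-1}(B)\}$ is a strong domain cover of $f^{-1}(V_0)$ for the map $f \colon X \to Y$, i.e.\ preimages in $X$, the \emph{middle} level of the diagram. You instead set $\widehat A = \overline f^{-1}(A)$, $\widehat B = \overline f^{-1}(B)$ (preimages in $X_f$) and assert that their intersection is connected ``by the strong-covering hypothesis.'' That is exactly backwards: the whole point of the argument is that the $\overline f$-preimages meet in a \emph{disconnected} set, and connectedness upstairs is precisely what gets contradicted. The paper's local step runs: if $W_0^A \cap W_0^B$ (the preimages under $\overline f \r W_0$, where $W_0$ is the $x$-component of $\overline f^{-1}(V_0)$) were connected, then by the restriction isomorphism \eqref{tf} together with Lemma~\ref{1} (packaged as Lemma~\ref{kuu}), normality of $f$ over $A \cap B$ would force $\Deck(p \r W_0) \unlhd \Deck(\overline f \r W_0)$ to be a normal subgroup, hence $f \r f^{-1}(V_0)$ would be a normal branched covering --- contradicting the non-normality clause of destructivity. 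You do eventually gesture at this contrapositive, so the local contradiction is recoverable from your sketch, but as written your setup asserts the very connectedness the proof must refute.

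The decisive gap is the local-to-global step, which you correctly flag as the main obstacle but do not close. Your plan is to exhibit a nontrivial class in $H_1$ of a small set and argue that ``its nontriviality persists'' under inclusion into $W$ because the inclusion ``factors through intermediate strong covers carrying the same $\pi_0$-obstruction.'' No such persistence holds in general: the inclusion-induced map on $H_1$ can perfectly well be zero (a loop nontrivial in a small set can bound in $W$), and nothing in your sketch rules this out; ``factoring through intermediate covers'' is not a mechanism. The paper avoids this issue entirely --- it never maps the homology of a small set into $W$. Instead it promotes the $\pi_0$-obstruction to a domain cover of $W$ \emph{itself}: using the $U$-smallness to arrange $y \in \ol{B} \subset V_0$, one chooses a connected open $W' \subset W$ covering $W \setminus W_0^B$ with $W' \cap W_0^B = \es$; then $\{W_0^B,\ W_0^A \cup W'\}$ is a domain cover of $W$ (each piece connected) whose intersection equals $W_0^A \cap W_0^B$, already shown disconnected. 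Lemma~\ref{2} (the reduced Mayer--Vietoris argument) applied directly to $W$ then gives $H_1(W;\Z) \neq 0$. The point is that a disconnected intersection is a $\pi_0$-datum that survives enlarging one piece of the cover away from the other piece; that enlargement trick is what your proposal is missing, and without it the argument does not reach $W$.
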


We begin the proof of Theorem \ref{15} with two lemmas. The following observation is well known for experts.

\begin{Apulause}\label{2} Let $X$ be a locally connected Hausdoff space and $W \subset X$ an open and connected subset. Suppose there exists open and connected subsets $U,V \subset W$ so that $W=U \cup V$ and $U \cap V$ is not connected. Then the first homology group $H_1(W; \Z)$ is not trivial. 
\end{Apulause}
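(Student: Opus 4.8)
The plan is to use the reduced Mayer--Vietoris sequence recorded at the end of the Preliminaries, specializing it to the decomposition $W = U \cup V$. Since $W$ is connected and the excerpt's Mayer--Vietoris sequence requires $U \cap V$ to be connected, I first note that the statement's hypothesis that $U \cap V$ is \emph{not} connected is precisely what will force $H_1(W;\Z) \neq 0$. So the strategy is to compare the sequence with what it would have to be if $H_1(W;\Z)$ vanished, and derive a contradiction from the connectivity count of the components.

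\begin{proof}[Proof proposal]
First I would invoke the reduced Mayer--Vietoris sequence for the open cover $\{U,V\}$ of $W$. Because $X$ is locally connected and $U,V$ are open, their intersection $U \cap V$ is an open subset of a locally connected space, hence locally connected, and so $\widetilde H_0(U \cap V; \Z)$ is free abelian of rank $(c-1)$, where $c \geq 2$ is the number of connected components of $U \cap V$ (here I use the hypothesis that $U \cap V$ is not connected, so $c \geq 2$). The relevant terminal segment reads
\begin{equation*}
H_1(W;\Z) \to \widetilde H_0(U \cap V;\Z) \to \widetilde H_0(U;\Z) \oplus \widetilde H_0(V;\Z) \to \widetilde H_0(W;\Z).
\end{equation*}
Since $U$, $V$, and $W$ are all connected, the reduced zeroth homology groups $\widetilde H_0(U;\Z)$, $\widetilde H_0(V;\Z)$, and $\widetilde H_0(W;\Z)$ all vanish. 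Thus the map $\widetilde H_0(U \cap V;\Z) \to \widetilde H_0(U;\Z)\oplus \widetilde H_0(V;\Z)$ is the zero map into a trivial group, and exactness forces the homomorphism $H_1(W;\Z) \to \widetilde H_0(U \cap V;\Z)$ to be surjective.

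Now I would conclude: since $\widetilde H_0(U \cap V;\Z) \cong \Z^{c-1}$ with $c \geq 2$ is nontrivial, a surjection onto it cannot exist from the trivial group, so $H_1(W;\Z)$ cannot be trivial. This completes the argument. The main subtlety to be careful about is verifying that the Mayer--Vietoris sequence quoted in the Preliminaries applies here: that version is stated under the hypothesis that $U \cap V$ is connected, so I should instead cite (or record) the general reduced Mayer--Vietoris sequence, which holds for any open cover of $W$ by two open sets, and which terminates in exactly the displayed segment without requiring connectivity of the intersection. This is the only place where I expect a reader might object, and it is entirely routine.
\end{proof}

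The hard part, if any, is purely expository rather than mathematical: one must make sure the cited Mayer--Vietoris sequence is invoked in the form valid for a disconnected intersection, since the version literally written in the Preliminaries assumes $U \cap V$ connected. Everything else is a direct exactness-and-rank count, with no geometry of the branched covering entering at this step.
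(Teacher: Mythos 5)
Your proposal is correct and is essentially the paper's own proof: both apply the terminal segment of the reduced Mayer--Vietoris sequence for $W=U\cup V$, use connectedness of $U$, $V$, $W$ to kill the outer reduced $\widetilde H_0$ terms, and conclude that $\widetilde H_0(U\cap V;\Z)\neq 0$ forces $H_1(W;\Z)\neq 0$ (the paper phrases this as a contradiction, you phrase it as a surjection onto a nontrivial group, which is the same argument). Your expository caveat is well taken --- the Preliminaries state the sequence with $U\cap V$ connected, yet the paper's own proof, like yours, needs and tacitly uses the general version valid for any open cover with nonempty intersection.
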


\begin{proof} Towards contradiction we suppose that $H_1(W; \Z)=0.$ Then the reduced Mayer-Vietoris sequence
$$\to H_1(W;\Z) \to \widetilde{H}_0(U \cap V;\Z) \to \widetilde{H}_0(U; \Z) \oplus \widetilde{H}_0(V; \Z) \to \widetilde{H}_0(W; \Z)$$
takes the form
$$0 \to \widetilde{H}_0(U \cap V; \Z) \to 0.$$
Thus, $\widetilde{H}_0(U \cap V; \Z)=0.$ Thus $U \cap V$ is connected, which is a contradiction. Thus, $H_1(W; \Z)$ is not trivial. 
\end{proof}

The following lemma is the key observation in the proof of Theorem \ref{15}. 

\begin{Apulause}\label{kuu}Suppose $f \colon X \to Y$ is a branched covering between manifolds. Suppose $W$ is a connected locally connected Hausdorff space and $p \colon W \to X$ and $q \colon W \to Y$ are normal branched coverings so that the diagram
\begin{equation*}
\xymatrix{
& W \ar[ld]_{p} \ar[rd]^{q} &\\
X \ar[rr]^{f}  & & Y}
\end{equation*} 
commutes. Suppose there exists an open and connected subset $C_1 \subset Y$ so that $D_1=f^{-1}(C_1)$ is connected and $f \r D_1 \colon D_1 \to C_1$ is a normal branched covering. Then $f$ is a normal branched covering, if $E_1=q^{-1}(C_1)$ is connected. 
\end{Apulause}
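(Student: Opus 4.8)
The plan is to reduce the statement to a purely group-theoretic assertion about deck-trans\-for\-ma\-ti\-on groups and then invoke Lemma \ref{1}. Since the diagram commutes we have $q = f \circ p$, and hence every deck transformation $\tau$ of $p$ satisfies $q \circ \tau = f \circ p \circ \tau = f \circ p = q$, so that $\Deck(p) \subset \Deck(q)$ is a subgroup. By Lemma \ref{1} it therefore suffices to prove that $\Deck(p) \subset \Deck(q)$ is a \emph{normal} subgroup, as this is equivalent to $f$ being a normal branched covering.

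The key idea is to transport the hypothesis on $f \r D_1$ up to the full map $f$ by restricting everything over $C_1$. First I would record the set-theoretic identity
$$p^{-1}(D_1) = p^{-1}(f^{-1}(C_1)) = (f \circ p)^{-1}(C_1) = q^{-1}(C_1) = E_1,$$
so that $E_1$ is simultaneously the preimage of the connected open set $C_1$ under the normal branched covering $q$ and the preimage of the connected open set $D_1$ under the normal branched covering $p$ (here $D_1$ is open and connected, being $f^{-1}(C_1)$). Since $E_1$ is connected by assumption, the restriction result recalled in the Preliminaries, together with the isomorphism \eqref{tf}, applies to both $p$ and $q$: the restricted maps $q \r E_1 \colon E_1 \to C_1$ and $p \r E_1 \colon E_1 \to D_1$ are again normal branched coverings, and the restriction homomorphisms
$$\Deck(q) \to \Deck(q \r E_1), \quad \Deck(p) \to \Deck(p \r E_1), \qquad \tau \mapsto \tau \r E_1,$$
are isomorphisms. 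Note that $D_1$ and $C_1$ are manifolds, being open subsets of $X$ and $Y$, and that the restricted diagram over $E_1, D_1, C_1$ again commutes, with $q \r E_1 = (f \r D_1) \circ (p \r E_1)$.

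Now I would apply Lemma \ref{1} on the restricted level: since $f \r D_1 \colon D_1 \to C_1$ is a normal branched covering by hypothesis, the lemma yields that $\Deck(p \r E_1) \subset \Deck(q \r E_1)$ is a normal subgroup. It then remains to push this normality back up to $\Deck(p) \subset \Deck(q)$, and this is the step I expect to be the main (if modest) obstacle. The point is that the two restriction isomorphisms must be compatible with the inclusions, i.e. the square with horizontal inclusions $\Deck(p) \hookrightarrow \Deck(q)$ and $\Deck(p \r E_1) \hookrightarrow \Deck(q \r E_1)$ and vertical restriction isomorphisms has to commute; this is immediate because both vertical maps are given by the single rule $\tau \mapsto \tau \r E_1$, so the restriction isomorphism $\Deck(q) \to \Deck(q \r E_1)$ carries $\Deck(p)$ exactly onto $\Deck(p \r E_1)$. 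Since an isomorphism of groups sends normal subgroups to normal subgroups, normality of $\Deck(p \r E_1)$ in $\Deck(q \r E_1)$ forces normality of $\Deck(p)$ in $\Deck(q)$. A final application of Lemma \ref{1} to the original diagram then shows that $f$ is a normal branched covering; everything outside the compatibility check is bookkeeping resting on \eqref{tf} and Lemma \ref{1}.
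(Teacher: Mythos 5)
Your proof is correct and takes essentially the same route as the paper's: both restrict over $C_1$, use the isomorphism \eqref{tf} (valid since $E_1=q^{-1}(C_1)=p^{-1}(D_1)$ is connected) to identify $\Deck(p)\cong\Deck(p\r E_1)$ and $\Deck(q)\cong\Deck(q\r E_1)$ via $\tau\mapsto\tau\r E_1$, deduce from normality of $f\r D_1$ that $\Deck(p\r E_1)\subset\Deck(q\r E_1)$ is a normal subgroup, transport this back through the compatible restriction isomorphisms, and conclude by Lemma \ref{1} that $f$ is normal. The only difference is that you make explicit the bookkeeping the paper leaves implicit, namely the identity $p^{-1}(D_1)=E_1$, the two invocations of Lemma \ref{1}, and the check that the restriction isomorphism for $q$ carries $\Deck(p)$ onto $\Deck(p\r E_1)$.
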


\begin{proof}
Since $E_1:=q^{-1}(C_1)$ is connected, we have 
$$\mathrm{Deck}(q)=\{\tau \in \mathrm{Deck}(q) : \tau(E_1)=E_1\} \cong \mathrm{Deck}(q \r E_1 \colon E_1 \ra C_1)$$ 
and
$$\mathrm{Deck}(p)=\{\tau \in \mathrm{Deck}(p) : \tau(E_1)=E_1\} \cong \mathrm{Deck}(p \r E_1 \colon E_1 \ra D_1),$$
where the isomorphisms are canonical in the sense that they map every deck-homomorphism $\tau \colon W \to W$ to the restriction $\tau \r E_1 \colon E_1 \ra E_1.$

Since $f \r D_1 \colon D_1 \ra C_1$ is a normal branched covering, 
$$\mathrm{Deck}(p \r E_1 \colon E_1 \ra D_1) \subset \mathrm{Deck}(q \r E_1 \colon E_1 \ra C_1)$$ is a normal subgroup. Hence, $\mathrm{Deck}(p) \subset \mathrm{Deck}(q)$ is a normal subgroup. Hence, the branched covering $f \colon X \ra Y$ is normal. 
\end{proof}

\begin{proof}[Proof of Theorem \ref{15}] Let $W \subset X_f$ be a open set and $y \in f(W)$ a destructive point and $x \in \bar f^{-1}\{y\}$. By Lemma \ref{2}, to show that $H_1(W; \Z)\neq 0$ it is sufficient to show that there exists a domain cover of $W$ that is not strong. 

Let $V_0$ be a destructive neighbourhood of $y$ so that the $x$-component $W_0$ of is a normal neighbourhood of $x$ in $W.$ Let $\{A,B\}$ be a strong domain cover of $V_0$ so that $y \in \bar B \subset V_0$ and $\{W_0^A,W_0^B\}$ is a domain cover of $W_0$ for $W_0^A:=(f\r W_0)^{-1}(A)$ and $W_0^B:=(f\r W_0)^{-1}(B),$ (see Lemma \ref{fun}).  

We first show that $\{W_0^A,W_0^B\}$ is not strong. Suppose towards contradiction that $\{W_0^A,W_0^B\}$ is strong. Then $A \cap B,$ $f^{-1}(A) \cap f^{-1}(B)$ and $W^A_0 \cap W^B_0$ are connected and
\begin{equation*}\label{hei}
\xymatrix{
& W^A_0 \cap W^B_0 \ar[ld]_{p \r W^A_0 \cap W^B_0} \ar[rd]^{\overline{f}\r W^A_0 \cap W^B_0} &\\
f^{-1}(A) \cap f^{-1}(B) \ar[rr]^{f \r f^{-1}(A) \cap f^{-1}(B)}  & & A \cap B}
\end{equation*} 
is a commutative diagram of branched covers. 
In particular, since $f \r f^{-1}(A) \cap f^{-1}(B)$ is a normal branched cover $\mathrm{Deck}(p \r W^A_0 \cap W^B_0)\subset \mathrm{Deck}(\bar f \r W^A_0 \cap W^B_0)$ is a normal subgroup. 
On the other hand, since 
$$W^A_0 \cap W^B_0=(\bar f \r W_0) ^{-1}(A \cap B)=(p \r W_0)^{-1}(f^{-1}(A) \cap f^{-1}(B)) \subset W_0$$ is connected,
$\mathrm{Deck}(p \r W^A_0 \cap W^B_0)\cong \mathrm{Deck}(p \r W_0)$, $\mathrm{Deck}(\bar f \r W^A_0 \cap W^B_0) \cong \mathrm{Deck}(\bar f \r W_0)$ and in particular, $\mathrm{Deck}(p \r W_0)\subset \mathrm{Deck}(\bar f \r W_0)$ is a normal subgroup. Thus the factor $f \r f^{-1}(V_0) \colon f^{-1}(f(V_0)) \to V_0$ of $\bar f \r W_0 \colon W_0 \to V_0$ is a normal branched covering. This is a contradiction, since $V_0$ is a destructive neighbourhood of $y$ and we conclude that $W^A_0 \cap W^B_0 \subset W_0$ is not connected. 

Since $\overline{B} \subset V_0$ there exists a connected neighbourhood $W' \subset W$ of $W \setminus W_0^B$ so that $W' \cap W_0^B= \es.$ Now $W_0^A \cup W'$ is connected, since $W_0^A$ is connected and every component of $W'$ has a non-empty intersection with $W_0^A.$ Further, $W=W_0^B \cup (W_0^A \cup W')$ and $W_0^B \cap (W_0^A \cup W')=W_0^A \cap W_0^B.$ Thus $\{W_0^B,W_0^A \cup W'\}$ is a domain cover of $W$ that is not strong and by Lemma \ref{2} we conclude $H_1(W; \Z)\neq 0$.
\end{proof}

This concludes the proof of Theorem \ref{11}, and further by Corollary \ref{pimia}, the proof of Theorem  \ref{toka} in the introduction.

\bibliographystyle{abbrv}
\bibliography{Abelian_Church-Hemmingsen}

\end{document}